\newcommand{\N}{{\mathbb N}}
\newcommand{\R}{{\mathbb R}}
\newcommand{\E}{{\mathbb E}}
\newcommand{\C}{{\mathbb C}}
\def\lra{\longrightarrow}
\def\ra{\rightarrow}
\newtheorem{Thm}{Theorem}[section]
\newtheorem{thm}[Thm]{Theorem}
\newtheorem{coro}[Thm]{Corollary}
\newtheorem{lem}[Thm]{Lemma}
\newtheorem{prop}[Thm]{Proposition}
\newtheorem{defi}[Thm]{Definition}
\begin{document} 
\title[multifractal analysis  of the  branching random walk in $\R^d$]{On the   multifractal analysis  of the  branching random walk in $\R^d$}

\author{Najmeddine Attia}

\address{INRIA Paris-Rocquencourt, domaine de Volucceau, BP 105, 78153 Le Chesnay Cedex, France }
\email{najmeddine.attia@inria.fr }
%\address{ E-mail adress :\text{ najmeddine.attia@inria.fr }}

\maketitle

\begin{abstract}

We establish the almost sure validity of the multifractal formalism for $\R^d$-valued branching random walks on  the whole relative interior of the natural convex domain of study.  
\end{abstract}

\section{Introduction and statement of the result}
\label{S1}
This paper deals with the multifractal analysis of $\R^d$-valued branching random walks. The case $d=1$ is now well known, but it turns out that extending the known results to higher dimensions is not a direct  application of the method used in dimension 1. Let us start with the setting of the problem. 

\medskip

Let $\big( N, X_1, X_2, \cdots  \big)$ be a random vector taking values in $\N_+ \times (\R^d)^{\N_+}$. Then consider  $\big\{ \big(N_{u}, X_{u1}, X_{u2}, \cdots \big)\big\}_{u\in \bigcup_{n\geq 0} \N^n_+}$ be a family of independent copies of the vector $\big( N, X_1, X_2, \cdots \big)$  indexed by the set of finite words over the alphabet $\N_+$ ($\N^0_+$ contains the empty word  denoted by $\emptyset$). Let $T$ be the Galton-Watson tree with defining elements $\{N_u\}$:  we have $\emptyset\in T$ and, if $ u \in T $ and $i \in \N_+$  then $ui$, the concatenation of $u$ and $i$, belongs to $T$ if and only if $1 \leq i \leq N_u$. Similarly, for each $u \in\bigcup_{n\geq 0}  \N^n_+$, denote by $T(u)$ the Galton-Watson tree rooted at $u$ and defined by the $ N_{uv}$, $v \in \bigcup_{n\geq 0} \N^n_+$. For $n\ge 1$ and $u \in\bigcup_{n\geq 0}  \N^n_+$, denote $T(u)\cap  \N^n_+$ by $T_n(u)$. 

\medskip

We assume that $\E(N) > 1$ so that the Galton-Watson tree is supercritical. Without loss of generality, we also assume that the probability of extinction is equal to $0$, so that $\mathbb{P}(N\ge 1) = 1$.\\

For each infinite word $t=t_1 t_2 \cdots\in\N_+^{\N_+}$ and $n\ge 0$, we set $t_{|n}=t_1 \cdots t_n \in\N_+^n$. If $u\in \N^n_+$ for some $n\ge 0$, then $n$ is the length of $u$ and it is denoted by $|u|$ ($t_{|0}=\emptyset$). Then, we denote by $[u]$ the set of infinite words $t \in\N_+^{\N_+}$ such that $t_{||u|}=u$. 

The set $\N_+^{\N_+}$ is endowed with the standard ultrametric distance
 $$ d : (u, v) \mapsto e^ {-\sup \{ |w| : u \in [w],  v \in [w]\}} ,$$
with the convention $\exp(-\infty)=0$. The boundary of the Galton-Watson tree $T$ is defined as the compact set 
$$\partial T = \bigcap_{n \geq 1}  \bigcup_{u \in T_n}  [u] ,$$
consisting of the infinite words $t=t_1 t_2 \cdots$ over $\N_+$ such that for all $n \geq 0, t_{|n}=t_1 \cdots t_n \in T$.  

\medskip

After the strong law of large numbers, we know that, given $t \in \partial T$, we have, if the components of $X$  are integrable and i.i.d., $\displaystyle\lim_{n\ra \infty} \frac{1}{n} S_n (t) = \E(X)$  almost surely, where $S_n (t)= \displaystyle\sum_{k=1}^{n} X_{t_1 \cdots t_k}$.  Since $\partial T$ is not countable,  the following  question naturally arises : are there some $t\in \partial T$ so that  $\displaystyle\lim_{n\ra \infty} \frac{1}{n} S_n (t) = \alpha \neq \E(X)?$ Multifractal analysis is a framework adapted to answer this question. Consider the set $\mathcal I$ of those   $\alpha \in \R^d$ such that 
$$
E (\alpha) = \Big\{t\in \partial T : \displaystyle\lim_{n \ra \infty} \frac{1}{n} \sum_{k=1}^{n} X_{u_1\cdots u_k } =\alpha  \Big\} \;\;\neq \emptyset.$$
These level sets can be described geometrically through their Hausdorff dimensions. They have been studied by many authors when $d=1$, see for instance  \cite{HoWa,Falc,Mol,jul3,Big3}; all these papers also deal with the multifractal analysis of associated Mandelbrot measures (see also \cite{KP,Pey2,LiuRouault} for the study of Mandelbrot measures dimension). \\

The vector space $\R^d$ is endowed with the canonical scalar product and the associated euclidean norm respectively denoted $\left\langle \cdot |\cdot  \right\rangle$ and $\left\|\cdot  \right\|$. For all $x\in \R^d$ and $r\ge 0$, $B(x,r)$ stands for the closed Euclidean ball of radius $r$ centered at $x$. 

\medskip

We will state our main result by using the notion of multifractal formalism (see~\cite{Pey4} for an abstract  vectorial multifractal formalism). Let us define the pressure like function 
 $$P(q)=\displaystyle\limsup_{n\rightarrow\infty}\frac{1}{n}\log \Big(\displaystyle\sum_{u\in T_n}  \exp \big( \left\langle q |S_n(u)  \right\rangle\big) \Big)\;\;\;\; (q\in \R^d).$$
Let  $P^*$ stand for the Legendre transform of the function $P$, where by convention the Legendre transform of a mapping $f:\R^d\lra \R$ is  defined as the concave and upper semi-continuous function : $$f^*(\alpha):=\displaystyle\inf_{q\in\R^d} \Big(f(q)-\left\langle q,\alpha\right\rangle \Big). $$
We say that the multifractal formalism holds at $\alpha\in\R^d$ if $\dim\, E(\alpha)=P^*(\alpha)$. \\

\medskip

%Let   $$ S(q) = \displaystyle\sum_{i=1}^N \exp \big(\left\langle q|X_i \right\rangle\big),\;\;\;(q\in \R^d). $$
For the sake of simplicity we will assume throughout that the logarithmic moment generating function
$$ \widetilde P (q)  : q\in \R^d \mapsto \log \E\Big(\sum_{i=1}^N \exp \big(\left\langle q|X_i \right\rangle\big)\Big),$$ 
is finite over $\R^d$ (see Section~\ref{remarks} for the relaxation of this assumption). 

\medskip

Let $$J = \Big\{ q \in \R^d  ; \tilde{P}(q)-\langle q|\nabla \tilde{P}(q)\rangle   > 0 \Big\}.$$

\medskip

Let
$$ \Omega_{\gamma}^1= \mathrm{int} \Big\{ q : \E [ \big | \displaystyle\sum_{i=1}^N e^{\left\langle q| X_i  \right\rangle  } \big |^\gamma ] < \infty \Big\}, \; \;\; \Omega^1 = \bigcup_{\gamma\in (1, 2]} \Omega_{\gamma}^1,$$
and $${\mathcal{J}} = J \cap \Omega^1
  \;\; \text {and} \;\;  I = \Big\{ \nabla\widetilde P(q)  ;q\in {\mathcal J}  \Big\}.$$

Our main result is the following. 
\begin{thm}\label{tt} Suppose that $\widetilde P$ is finite over $\R^d$. With probability $1$, for all $  \alpha\in I$, we have $\widetilde P^*(\alpha)=P^*(\alpha)$ and the multifractal formalism holds at $\alpha$, i.e., 
$\dim E (\alpha)=\widetilde P^*(\alpha)$; in particular, $E(\alpha) \neq \emptyset$.
\end{thm}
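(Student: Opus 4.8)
\medskip
\noindent\emph{Proof proposal.}
The plan is to frame $\dim E(\alpha)$ between $P^*(\alpha)$ from above and $\widetilde P^*(\alpha)$ from below and, in parallel, to establish $P^*(\alpha)=\widetilde P^*(\alpha)$ for $\alpha\in I$, everything on a single event of full probability. For the identity, one uses that $\E\big(\sum_{u\in T_n}\exp\langle q|S_n(u)\rangle\big)=\exp(n\widetilde P(q))$ by the branching property, so Markov's inequality and Borel--Cantelli, applied over a countable dense set of $q$'s and then extended to all $q$ by convexity of $q\mapsto n^{-1}\log\sum_{u\in T_n}\exp\langle q|S_n(u)\rangle$, yield $P(q)\le\widetilde P(q)$ for every $q\in\R^d$, hence $P^*\ge\widetilde P^*$. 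In the other direction, for $q\in\mathcal J$ the Biggins martingale $W_n(q)=\sum_{u\in T_n}\exp(\langle q|S_n(u)\rangle-n\widetilde P(q))$ is a non-negative martingale of mean $1$; the membership $q\in J$ is, via the identity $\E\big(\sum_{i=1}^N A_i\log A_i\big)=\langle q|\nabla\widetilde P(q)\rangle-\widetilde P(q)$ with $A_i=\exp(\langle q|X_i\rangle-\widetilde P(q))$, exactly the non-degeneracy condition for the cascade, while $q\in\Omega^1$ provides an $L^\gamma$ bound for some $\gamma\in(1,2]$, so $W_n(q)\to W(q)>0$ almost surely and in $L^\gamma$, which forces $P(q)=\widetilde P(q)$ on $\mathcal J$. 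As $\widetilde P$ is finite on $\R^d$, hence convex and differentiable, for $\alpha=\nabla\widetilde P(q_0)$ with $q_0\in\mathcal J$ the convex map $q\mapsto\widetilde P(q)-\langle q|\alpha\rangle$ attains its minimum at $q_0$, so $P^*(\alpha)\le\widetilde P(q_0)-\langle q_0|\nabla\widetilde P(q_0)\rangle=\widetilde P^*(\alpha)$; combined with $P^*\ge\widetilde P^*$ this gives $P^*=\widetilde P^*$ on $I=\nabla\widetilde P(\mathcal J)$, and also shows $\widetilde P^*(\alpha)>0$ there.

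For the upper bound I would use a first-moment (covering) estimate. For $q\in\R^d$ and $\epsilon>0$, every $u\in T_n$ with $\|S_n(u)/n-\alpha\|<\epsilon$ satisfies $1\le\exp(\langle q|S_n(u)\rangle-n\langle q|\alpha\rangle+n\|q\|\epsilon)$, whence
\[
\E\,\#\{u\in T_n:\ \|S_n(u)/n-\alpha\|<\epsilon\}\ \le\ \exp\!\big(n\,(\widetilde P(q)-\langle q|\alpha\rangle+\|q\|\epsilon)\big).
\]
Running Borel--Cantelli over a countable dense set of $q$'s, a countable dense set of centres, and rational $\epsilon$ (using monotonicity of these counts in $\epsilon$ and continuity of $\widetilde P$) shows that almost surely, for all $\alpha$ and all $\epsilon>0$, this cardinality is eventually at most $\exp(n(\widetilde P^*(\alpha)+\epsilon))$. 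Since a point $t\in E(\alpha)$ has $t_{|n}$ in the above set for all large $n$, and the generation-$n$ cylinders $[u]$ have diameter $e^{-n}$, these give economical covers of $E(\alpha)$ and yield $\dim E(\alpha)\le\widetilde P^*(\alpha)$ simultaneously for all $\alpha$; this part needs no moment assumption beyond finiteness of $\widetilde P$.

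The substance lies in the lower bound. Fix $q\in\mathcal J$ and put $\alpha=\nabla\widetilde P(q)$. Let $\mu_q$ be the Mandelbrot cascade measure on $\partial T$ determined by $\mu_q([u])=\exp(\langle q|S_{|u|}(u)\rangle-|u|\widetilde P(q))\,W(u,q)$, where $W(u,q)$ is the limit of the Biggins martingale of the subtree $T(u)$; on the good event it is a Borel measure with $\mu_q(\partial T)=W(q)>0$ and $\E(W(q))=1$. Passing to the associated Peyri\`ere measure $\widehat{\mathbb P}_q$ on $\Omega\times\partial T$ (size-bias $\mathbb P$ by $W(q)$ and select a random ray $t$), the increments $X_{t_{|k}}$ along the ray become i.i.d.\ with the exponentially tilted law $f\mapsto\E\big(\sum_{i=1}^N\exp(\langle q|X_i\rangle-\widetilde P(q))f(X_i)\big)$, of mean $\nabla\widetilde P(q)$; Kolmogorov's strong law then gives $n^{-1}S_n(t)\to\alpha$, so $\mu_q$ is carried by $E(\alpha)$. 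Writing
\[
-\frac1n\log\mu_q([t_{|n}])\ =\ \widetilde P(q)-\frac1n\langle q|S_n(t)\rangle-\frac1n\log W(t_{|n},q),
\]
the first two terms converge to $\widetilde P(q)-\langle q|\alpha\rangle=\widetilde P^*(\alpha)$, while a change-of-measure computation gives $\widehat{\E}_q\big(W(t_{|n},q)^{\gamma-1}\big)=\E(W(q)^\gamma)<\infty$ for every $n$, so Borel--Cantelli yields $\limsup_n n^{-1}\log W(t_{|n},q)\le0$, $\widehat{\mathbb P}_q$-a.s. Since cylinders are exactly the balls of the ultrametric, the local dimension of $\mu_q$ is a.e.\ at least $\widetilde P^*(\alpha)$, and the mass distribution principle gives $\dim E(\alpha)\ge\dim\mu_q=\widetilde P^*(\alpha)$; in particular $E(\alpha)\neq\emptyset$. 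As $q$ runs through $\mathcal J$, $\alpha=\nabla\widetilde P(q)$ runs through all of $I$.

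The principal obstacle --- and the reason the case $d\ge2$ does not reduce to the one-dimensional argument --- is to make this lower bound hold, on one event of probability $1$, for all $\alpha\in I$ at once: in dimension $\ge2$ there is no monotonicity in the vector parameter allowing a reduction to a countable dense set of $\alpha$'s. I would resolve this by upgrading each step above to a statement uniform in $q$ on compact subsets $K\subset\mathcal J$: (i) Biggins' theorem on the uniform convergence of $q\mapsto W_n(q)$ on compacta, which also gives continuity of $q\mapsto W(u,q)$, hence of $q\mapsto\mu_q$; (ii) a uniform moment bound $\sup_{q\in K}\E(W(q)^\gamma)<\infty$; (iii) a large-deviation estimate uniform in $q$ over $K$, $\widehat{\mathbb P}_q(\|n^{-1}S_n(t)-\nabla\widetilde P(q)\|>\epsilon)\le e^{-c(K,\epsilon)n}$, with a matching uniform control of $n^{-1}\log W(t_{|n},q)$. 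One then fixes a countable dense $D\subset\mathcal J$, runs all the Borel--Cantelli arguments simultaneously over $D$, and transfers the conclusions to an arbitrary $q\in\mathcal J$ by approximation, the uniform bounds ensuring that the exceptional $\mu_q$-null sets depend continuously on $q$. Combining this uniform lower bound with the already-simultaneous upper bound and the identity $P^*=\widetilde P^*$ on $I$ proves the theorem; I expect step (iii), the uniform-in-$q$ law of large numbers with exponential rate under the Peyri\`ere measures together with the uniform control of the martingale limits along the spine, to be the main technical difficulty.
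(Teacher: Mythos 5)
Your overall architecture (upper bound by covering, lower bound via Mandelbrot measures $\mu_q$ carried by $E(\nabla\widetilde P(q))$, mass distribution principle) matches the paper, and your fixed-$q$ lower bound through the Peyri\`ere/size-biased measure is a correct classical argument. The genuine gap is exactly at the point you flag: passing from ``for each $q$, almost surely'' to ``almost surely, for all $q\in{\mathcal J}$ simultaneously''. Your proposed mechanism --- run Borel--Cantelli over a countable dense $D\subset{\mathcal J}$ and then transfer to arbitrary $q$ ``by approximation, the uniform bounds ensuring that the exceptional $\mu_q$-null sets depend continuously on $q$'' --- does not work as stated: the statements to be transferred are of the form ``$\mu_q$-a.e.\ $t$'', the measures $\mu_q$ for different $q$ are mutually singular (they are carried by disjoint level sets $E(\nabla\widetilde P(q))$), and a $\mu_{q'}$-null set for $q'$ close to $q$ gives no information about $\mu_q$. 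What is actually needed is a bound, summable in $n$, on quantities like $\E\bigl(\sup_{q\in K}\mu_q(A_{n,q})\bigr)$ for compact $K\subset{\mathcal J}$ (where $A_{n,q}$ are the bad events), together with a uniform-in-$(q,\lambda)$ convergence of $\frac1n\log\int_{\partial T}e^{\langle\lambda|S_n(t)\rangle}\,d\mu_q(t)$; producing such sup-over-$q$ estimates is the paper's core technical contribution. There this is done by extending $q\mapsto Y_n(u,q)$, and the auxiliary quantities $Z_n(q,\lambda)=\sum_{u\in T_n}e^{\langle q+\lambda|S_n(u)\rangle-n\widetilde P(q+\lambda)}Y(u,q)$ and $H_n(q,\nu)$, analytically to a complex neighborhood of $K$, and using Cauchy's formula on polydiscs to convert $L^{p_K}$ estimates ($1<p_K\le2$, obtained from a von Bahr--Esseen type inequality) into estimates on suprema over compacta; this yields a.s.\ uniform convergence of $L_n(q,\lambda)$ to $\widetilde P(q+\lambda)-\widetilde P(q)$, then G\"artner--Ellis, and a uniform control of $\frac1n\log Y(t_{|n},q)$, all on a single event. (The paper's Remark 2 explains why a real-variable substitute, e.g.\ Sobolev embedding, fails for $d\ge2$ with only moments of order $\le 2$ available.) Your items (i)--(iii) name the right statements, but you give no proof of (iii), and the dense-set transfer you propose in its place is precisely the step that fails; so the proposal is incomplete at the heart of the theorem.

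A secondary, repairable error: from $P\le\widetilde P$ pointwise you get $P^*\le\widetilde P^*$, not $P^*\ge\widetilde P^*$ as you wrote; consequently, as written you never establish $P^*(\alpha)\ge\widetilde P^*(\alpha)$ on $I$, and since your covering upper bound is stated directly against $\widetilde P^*$ rather than against the quenched pressure $P$ (as in the paper's Propositions 2.1--2.2), you cannot recover it by sandwiching either. Two repairs are available: prove $\dim E(\alpha)\le P^*(\alpha)$ using the quenched $P$, so that on $I$ one gets $\widetilde P^*(\alpha)\le\dim E(\alpha)\le P^*(\alpha)\le\widetilde P^*(\alpha)$; or exploit your own observation that the (uniform) positivity of the Biggins martingale limits gives $P=\widetilde P$ on the open set ${\mathcal J}$, whence for $\alpha=\nabla\widetilde P(q_0)$ with $q_0\in{\mathcal J}$ the convex function $q\mapsto P(q)-\langle q|\alpha\rangle$ has a critical point at $q_0$, hence a global minimum there, giving $P^*(\alpha)=P(q_0)-\langle q_0|\alpha\rangle=\widetilde P^*(\alpha)$.
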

In dimension 1, this result has been proved  when $N$ is not random in \cite{jul3},  and in the weaker form, for each fixed $\alpha \in I$, almost surely $\dim E (\alpha)=\widetilde P^*(\alpha)$, when $N$ is random in \cite{HoWa,Falc,Mol,Big3}. Further comments on this result and its possible improvements are given in Section~\ref{remarks}.
  \medskip

%%%%%%%%%%%%%%%%%%%%%%%%%%%%%%%%%%%%%%%%%%%%%%%%%%%%%%%%%%%%%%%%%%%%%%%%%%%%%%%%%%%%%%%%%%%%%%%%%%%%%%%%%%%%%%%%%%%%%%%%%%%%%%%%%%
%%%%%%%%%%%%%%%%%%%%%%%%%%%%%%%%%%%%%%%%%%%%%%%%%%%%%%%%%%%%%%%%%%%%%%%%%%%%%%%%%%%%%%%%%%%%%%%%%%%%%%%%%%%%%%%%%%%%%%%%%%%%%%%%%%
%%%%%%%%%%%%%%%%%%%%%%%%%%%%%%%%%%%%%%%%%%%%%%%%%%%%%%%%%%%%%%%%%%%%%%%%%%%%%%%%%%%%%%%%%%%%%%%%%%%%%%%%%%%%%%%%%%%%%%%%%%%%%%%%%%%

\section{Proof}
\label{S2}
%%%%%%%%%%%%%%%%%%%%%%%%%%%%%%%%%%%%%%%%%%%%%%%%%%%%%%%%%%%%%%%%%%%%%%%%%%%%%%%%%%%%%%%%%%%%%%%%%%%%%%%%%%%%%%%%%%%%%%%%%%%%%%%%%%%
                     \subsection{Upper bounds for the Hausdorff dimension }
%%%%%%%%%%%%%%%%%%%%%%%%%%%%%%%%%%%%%%%%%%%%%%%%%%%%%%%%%%%%%%%%%%%%%%%%%%%%%%%%%%%%%%%%%%%%%%%%%%%%%%%%%%%%%%%%%%%%%%%%%%%%%%%%%%%                 

\begin{prop} \label{p1}
With probability $1$, $P(q)\leq \widetilde P (q)$ for all $q\in \R^d$, and then $ P^*(\alpha)\leq \widetilde P^*(\alpha)$, for all $\alpha \in \R^d.$
\end{prop}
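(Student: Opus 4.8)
The plan is to prove the pointwise bound $P(q)\le\widetilde P(q)$ first for each fixed $q$, by a first--moment estimate and the Borel--Cantelli lemma, then to make it simultaneous in $q$ by a convexity (H\"older) argument that reduces everything to a countable dense set, and finally to read off the inequality between Legendre transforms from monotonicity of the transform.

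First I would fix $q\in\R^d$ and set $W_n(q)=\sum_{u\in T_n}\exp(\langle q|S_n(u)\rangle)$. Conditioning on the $\sigma$-field $\mathcal F_{n-1}$ generated by the first $n-1$ generations and using $S_n(ui)=S_{n-1}(u)+X_{ui}$ together with the independence of the vectors $(N_u,X_{u1},X_{u2},\dots)$, $|u|=n-1$, from $\mathcal F_{n-1}$, one gets $\E\big(W_n(q)\mid\mathcal F_{n-1}\big)=e^{\widetilde P(q)}\,W_{n-1}(q)$, so by induction $\E(W_n(q))=e^{n\widetilde P(q)}$ (equivalently, $e^{-n\widetilde P(q)}W_n(q)$ is the usual additive martingale, which is well defined because $\widetilde P(q)<\infty$ by assumption). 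Markov's inequality then gives, for any $\vep>0$, $\mathbb P\big(W_n(q)\ge e^{n(\widetilde P(q)+\vep)}\big)\le e^{-n\vep}$, which is summable in $n$; by Borel--Cantelli, almost surely $\tfrac1n\log W_n(q)<\widetilde P(q)+\vep$ for all large $n$, hence $P(q)\le\widetilde P(q)+\vep$. Letting $\vep\downarrow 0$ along a sequence gives $P(q)\le\widetilde P(q)$ almost surely.

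Intersecting these countably many almost sure events over $q\in\Q^d$ produces a single event of probability $1$ on which $P(q)\le\widetilde P(q)$ for every rational $q$. To extend this to all $q\in\R^d$ on the same event, I would use that for each fixed $n$ the map $q\mapsto\tfrac1n\log W_n(q)$ is convex, being (up to the factor $1/n$) a log--sum--exp of affine functions of $q$. Hence for rational points $q_1,\dots,q_{d+1}$ and weights $\lambda_i\ge0$ with $\sum_i\lambda_i=1$ and $q=\sum_i\lambda_i q_i$, H\"older's inequality yields $\tfrac1n\log W_n(q)\le\sum_i\lambda_i\tfrac1n\log W_n(q_i)$, and passing to the $\limsup$ in $n$ gives $P(q)\le\sum_i\lambda_i P(q_i)\le\sum_i\lambda_i\widetilde P(q_i)$. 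Choosing, for each $k\ge1$, rational vertices $q_i^{(k)}$ of a small nondegenerate simplex containing $q$ with all $q_i^{(k)}$ within $1/k$ of $q$, and using that $\widetilde P$, being finite and convex on $\R^d$, is continuous, we get $\sum_i\lambda_i^{(k)}\widetilde P(q_i^{(k)})\to\widetilde P(q)$ as $k\to\infty$, whence $P(q)\le\widetilde P(q)$. The bound on Legendre transforms is then immediate: on this event $P(q)-\langle q|\alpha\rangle\le\widetilde P(q)-\langle q|\alpha\rangle$ for all $q$, so taking the infimum over $q$ gives $P^*(\alpha)\le\widetilde P^*(\alpha)$ for every $\alpha\in\R^d$.

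The only genuinely delicate point is this passage from a countable dense set of $q$'s to all of $\R^d$, since we need one null set to work for all $q$ at once; the convexity/H\"older device handles it cleanly, provided one checks that every point of $\R^d$ is a convex combination of rational points arbitrarily close to it and that the standing finiteness of $\widetilde P$ indeed forces its continuity. Everything else is routine first--moment bookkeeping.
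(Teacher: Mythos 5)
Your proof is correct and follows essentially the same route as the paper: a first-moment estimate for each fixed $q$ (the paper sums $\E\big(e^{-ns}\sum_{u\in T_n}e^{\langle q|S_n(u)\rangle}\big)$ over $n$ where you use Markov plus Borel--Cantelli, which is equivalent), followed by the convexity of $q\mapsto\frac1n\log\sum_{u\in T_n}e^{\langle q|S_n(u)\rangle}$ and continuity of $\widetilde P$ to pass from a countable dense set of $q$'s to all of $\R^d$ on one event. Your H\"older/simplex argument is simply an explicit rendering of the paper's brief appeal to convexity and continuity, and the Legendre-transform conclusion is handled identically.
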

\begin{proof}The  functions $\tilde{P}$ and  $P$ being convex and thus continuous, we only need to prove the inequality $P(q)\leq \widetilde P (q)$ for each $q\in \R^d$ almost surely. Fix $q\in \R^d$. For $s > \widetilde P(q) $ we have 
\begin{eqnarray*}
\E \Big (\sum_{n\geq1}e^{-ns}\sum_{u\in T_n} \exp \big( \left\langle q |S_{n}(u)\right\rangle\big) \Big )&=&\sum_{n\geq1}e^{-ns} \E \Big (\sum_{i=1}^{N}  \exp \big(\left\langle  q | X_i \right\rangle \big)\Big )^n \\
&=&\sum_{n\geq1}e^{n(\tilde{P}(q)-s)}.
\end{eqnarray*}
Consequently,
$\displaystyle\sum_{n\geq1} e^{-ns}\displaystyle\sum_{u\in T_n} \exp ( \left\langle q |S_{n}(u)\right\rangle) <\infty $ almost surely, so that we have $\displaystyle\sum_{u\in T_n} \exp ( \left\langle q |S_{n}(u)\right\rangle)=O(e^{ns})$ and $P(q) \le s$. Since $s >\tilde{P}(q)$ is arbitrary, we have the conclusion.
\end{proof}
\begin{prop}\label{p2}
With probability $1$, for all $\alpha \in \R^d$, $\dim E(\alpha)\leq P^*(\alpha)$, a negative dimension meaning that $E(\alpha)$ is empty.
\end{prop}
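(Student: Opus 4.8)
The plan is to establish the upper bound $\dim E(\alpha)\le P^*(\alpha)$ by a standard covering argument applied uniformly in $\alpha$, exploiting Proposition~\ref{p1} only through the definition of $P$. Recall that $E(\alpha)$ consists of those $t\in\partial T$ with $\frac1n S_n(t_{|n})\to\alpha$. Fix $q\in\R^d$ and $\vep>0$. For $t\in E(\alpha)$, once $n$ is large we have $\langle q|S_n(t_{|n})\rangle\ge n\langle q|\alpha\rangle - n\vep$, so the cylinder $[t_{|n}]$, which has diameter $e^{-n}$, satisfies $e^{-n(\langle q|\alpha\rangle-\vep)}\le \exp(\langle q|S_n(t_{|n})\rangle)\cdot e^{-2n\vep}$ for $n$ large (absorbing error terms). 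The idea is then that for any $s$, the $s$-dimensional pre-measure of $E(\alpha)$ at scale $e^{-n}$ is controlled by
$$\sum_{u\in T_n,\ \|S_n(u)/n-\alpha\|\le\vep} e^{-ns}\le e^{n(\langle q|\alpha\rangle+\vep\|q\|)}e^{-ns}\sum_{u\in T_n}\exp(\langle q|S_n(u)\rangle)\cdot e^{-n\langle q|\alpha\rangle},$$
where on the last sum we used $\langle q|S_n(u)\rangle\le n\langle q|\alpha\rangle + n\vep\|q\|$ on the relevant $u$; more carefully one restricts to the good $u$ and bounds $\exp(\langle q|S_n(u)\rangle)\le e^{n\langle q|\alpha\rangle+n\vep\|q\|}$ there.

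Carrying this out: define, for each $\alpha$ and each small $\vep>0$, the set $E(\alpha,\vep)$ of $t\in\partial T$ with $\|S_n(t_{|n})/n-\alpha\|\le\vep$ for all large $n$, so $E(\alpha)=\bigcap_{k}E(\alpha,1/k)$ and it suffices to bound $\dim E(\alpha,\vep)$. For $t\in E(\alpha,\vep)$ and $n\ge n_0(t)$, the cylinder $[t_{|n}]$ covers $t$, has diameter $e^{-n}$, and for any $q\in\R^d$ we have $1\le \exp\big(\langle q|S_n(t_{|n})\rangle-n\langle q|\alpha\rangle+n\vep\|q\|\big)$. Hence for $s>0$, writing $\mathcal G_n$ for the (finite) set of $u\in T_n$ with $\|S_n(u)/n-\alpha\|\le\vep$,
$$\sum_{u\in\mathcal G_n}(e^{-n})^s\le e^{-n(s-\langle q|\alpha\rangle+\vep\|q\|)}\sum_{u\in\mathcal G_n}\exp(\langle q|S_n(u)\rangle)\le e^{-n(s-\langle q|\alpha\rangle+\vep\|q\|)}\sum_{u\in T_n}\exp(\langle q|S_n(u)\rangle).$$
By the definition of $P(q)$ as a $\limsup$, for any $\delta>0$ the last sum is $\le e^{n(P(q)+\delta)}$ along a subsequence $n_j\to\infty$; along that subsequence the covers $\{[u]:u\in\mathcal G_{n_j}\}$ of $E(\alpha,\vep)$ (valid because every $t\in E(\alpha,\vep)$ eventually lies under such a cylinder, and one discards finitely many bad $t$ which form a countable, hence dimension-zero, set) have $s$-cost at most $e^{n_j(P(q)-s+\langle q|\alpha\rangle... )}$, wait, the clean statement: the cost is $\le e^{-n_j(s-\langle q|\alpha\rangle+\vep\|q\|-P(q)-\delta)}$, which tends to $0$ provided $s>P(q)-\langle q|\alpha\rangle+\vep\|q\|+\delta$. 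Therefore $\dim E(\alpha,\vep)\le P(q)-\langle q|\alpha\rangle+\vep\|q\|+\delta$ for all $q,\delta$, and letting $\delta\to0$, $\vep\to0$ and infimizing over $q$ gives $\dim E(\alpha)\le \inf_q\big(P(q)-\langle q|\alpha\rangle\big)=P^*(\alpha)$. When this infimum is negative, the bound forces every cover cost to vanish for all $s\ge0$, i.e.\ $E(\alpha)=\emptyset$ (the Hausdorff measure in dimension $0$ is counting measure, so $E(\alpha)$ must actually be empty, not merely of dimension zero — one should phrase the covering bound directly to get $\#E(\alpha,\vep)$ finite then trivial).

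The point that needs care — and the main obstacle for the \emph{uniform} (simultaneous in all $\alpha$) statement — is that the above argument fixes $\alpha$, whereas Proposition~\ref{p2} asserts the conclusion almost surely for all $\alpha\in\R^d$ at once. The resolution is that the random function $P$ is, by Proposition~\ref{p1}, almost surely finite and (being a $\limsup$ of convex functions that are pointwise bounded by the deterministic convex function $\widetilde P$) controlled; and the bound $\dim E(\alpha)\le P^*(\alpha)$, once known for every fixed $\alpha$ in a countable dense set $D\subset\R^d$, extends to all $\alpha$ because $P^*$ is concave and upper semi-continuous while $\alpha\mapsto \dim E(\alpha)$ is handled via the nesting $E(\alpha)\subset\bigcup\{E(\alpha,\vep)\}$ and the fact that a point $t$ with $S_n(t_{|n})/n\to\alpha$ also lies in $E(\alpha',\vep)$ for any $\alpha'$ with $\|\alpha'-\alpha\|<\vep/2$ eventually. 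Concretely: run the covering estimate above with a rational $q$ and with the target ball $\|S_n(u)/n-\alpha\|\le\vep$ replaced by membership in a fixed small rational ball $B$; this gives, simultaneously for all $\alpha\in B$, that $\dim E(\alpha)\le \sup_{\alpha'\in \overline B}\big(P(q)-\langle q|\alpha'\rangle\big)+(\text{diam }B)\|q\|$; then cover $\R^d$ by countably many such balls and let their radii shrink, using only countably many rational $q$, so that a single almost-sure event suffices. Upper semi-continuity of $P^*$ absorbs the limit. This packaging is routine but must be written carefully so that the countable intersection of almost-sure events is taken \emph{before} the supremum over the uncountable family of $\alpha$'s.
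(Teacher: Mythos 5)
Your overall strategy (cover $E(\alpha)$ by cylinders $[u]$ along which $S_n(u)$ is $\vep$-close to $n\alpha$, pay the Chernoff price $\exp(\langle q|S_n(u)\rangle-n\langle q|\alpha\rangle+n\vep\|q\|)\ge 1$ on those cylinders, and invoke the definition of $P(q)$) is exactly the paper's, but one step as written is wrong: you cover the whole of $E(\alpha,\vep)$ by the good cylinders at a \emph{single} scale $n_j$ and dismiss the uncovered points as ``finitely many bad $t$ which form a countable, hence dimension-zero, set''. The set of $t\in E(\alpha,\vep)$ whose threshold $n_0(t)$ exceeds $n_j$ is in general neither finite nor countable (a whole subtree of trajectories may leave the $\vep$-tube at time $n_j$ and return later), and it can carry positive dimension, so $\{[u]:u\in\mathcal G_{n_j}\}$ is simply not a cover of $E(\alpha,\vep)$. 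The standard repair is the one the paper uses: write $E(\alpha,\vep)=\bigcup_{N\ge 1}E_N$, where $E_N$ requires $\|S_n(t_{|n})-n\alpha\|\le n\vep$ for \emph{all} $n\ge N$ (the paper's $E(q,N,\epsilon,\alpha)$, after projecting on $q$); each $E_N$ is genuinely covered by the good cylinders at every scale $n\ge N$, the estimate gives $\dim E_N\le P(q)-\langle q|\alpha\rangle+\vep\|q\|$, and countable stability of Hausdorff dimension passes the bound to the union. (Incidentally, your intermediate prefactor should read $e^{-n(s+\langle q|\alpha\rangle-\vep\|q\|)}$; the signs in your displays are flipped, although the final threshold $s>P(q)-\langle q|\alpha\rangle+\vep\|q\|+\delta$ is the correct one. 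Also, since $P(q)$ \emph{is} the $\limsup$, the bound $\sum_{u\in T_n}e^{\langle q|S_n(u)\rangle}\le e^{n(P(q)+\delta)}$ holds for all large $n$, not merely along a subsequence.)

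Your closing paragraph about uniformity in $\alpha$ misdiagnoses where randomness enters. The covering argument uses no probabilistic input whatsoever: $P$ is defined pathwise, and for each realization the $\limsup$ definition yields the needed bound for all large $n$; hence the inequality $\dim E(\alpha)\le\inf_{q}\big(P(q)-\langle q|\alpha\rangle\big)=P^*(\alpha)$ holds \emph{surely}, simultaneously for every $\alpha\in\R^d$ and with the infimum taken over all real $q$ and all $\vep>0$. No countable dense families of rational $q$'s and balls, no union of almost-sure events, and no upper semicontinuity of $P^*$ are needed; Proposition~\ref{p1} is irrelevant here and only serves later to replace $P^*$ by $\widetilde P^*$. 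The extra packaging you propose would work, but it obscures the fact that Proposition~\ref{p2} is a deterministic covering statement, which is precisely why the paper can state it ``for all $\alpha$'' on a single event. The emptiness claim when $P^*(\alpha)<0$ is fine (a nonempty set has dimension $\ge 0$), and your $\mathcal H^0$ argument is an acceptable alternative.
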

\begin{proof} We have 
 \begin{eqnarray*}
E(\alpha) &=&  \bigcap _{\epsilon > 0} \bigcup_{N\in \N^*} \bigcap_{n\geq N} \Big\{t\in \partial T ;  \|  S_n(t) -  n \alpha \| \leq n\epsilon \Big\} \\ 
&\subset &  \bigcap_{q\in \R^d} \bigcap _{\epsilon > 0} \bigcup_{N\in \N^*} \bigcap_{n\geq N} \Big\{t\in \partial T ;  \; \left|  \left\langle q | S_n(t) -  n \alpha  \right\rangle \right| \leq n \|q\| \epsilon \Big\}.
\end{eqnarray*}
Fix $q\in \R^d$ and $\epsilon >0$. For $N \geq 1$, the set $E(q, N, \epsilon, \alpha)=  \bigcap_{n\geq N} \big\{t\in \partial T ;  \; \left|  \left\langle q | S_n(t) -  n \alpha  \right\rangle \right| \leq n \|q\| \epsilon \big\}$ is covered by the union of those $[u]$ such that $u \in T_n$, $n\ge N$, and
 $\left\langle q | S_n(u)-n\alpha\right\rangle+n\|q\| \epsilon\geq 0$. \\
 We define the s-dimensional Hausdorff measure of a set $E$ by 
 $${\mathcal H}^s(E) = \lim_{\delta\to 0} {\mathcal H}_{\delta}^s (E)=\lim_{\delta\to 0} \inf\Big\{\sum_{i\in\N}  \mathrm{diam}(U_i)\Big\},
$$
the infimum being taken over all the countable coverings $(U_i)_{i\in\N}$ of $E$  of diameters less than or equal to $\delta$.\\
 Thus, for $s\geq 0$ and $n \ge N$,
$${\mathcal H}_{e^{-n}}^s \big(E(q, N, \epsilon, \alpha) \big) \leq \displaystyle\sum_{u\in T_n} e^{-ns} \exp \big(\left\langle q | S_n(u)-n\alpha\right\rangle+n\|q\| \epsilon \big).$$
Consequently, if $\eta >0$ and $s > P(q) + \eta - \left\langle q | \alpha\right\rangle+\|q\| \epsilon$, by definition of $P(q)$, for $N$ large enough we have 
$${\mathcal H}_{e^{-n}}^s \big( E(q, N, \epsilon, \alpha) \big) \leq  e^{-n\eta/2}.$$
This yield ${\mathcal H}^s \big( E(q, N, \epsilon, \alpha) \big)=0$,  hence $\dim E(q, N, \epsilon, \alpha) \leq s.$ Since this holds for all $\eta > 0$ we get $\dim E(q, N, \epsilon, \alpha) \leq P(q) - \left\langle q | \alpha\right\rangle+\|q\| \epsilon$. It follows that 
$$\dim E(\alpha)\leq \displaystyle\inf_{q\in \R^d} \inf_{\epsilon >0} \sup_{N\in \N^*} P(q) - \left\langle q | \alpha\right\rangle+\|q\| \epsilon = P^*(\alpha).$$
If $P^*(\alpha) <0$, we necessarily have $E(\alpha)= \emptyset$. 
\end{proof}

%%%%%%%%%%%%%%%%%%%%%%%%%%%%%%%%%%%%%%%%%%%%%%%%%%%%%%%%%%%%%%%%%%%%%%%%%%%%%%%%%%%%%%%%%%%%%%%%%%%%%%%%%%%%%%%%%%%%%%%%%%%%%%%%%%%
                  \subsection{Lower bounds for the Hausdorff dimensions}
%%%%%%%%%%%%%%%%%%%%%%%%%%%%%%%%%%%%%%%%%%%%%%%%%%%%%%%%%%%%%%%%%%%%%%%%%%%%%%%%%%%%%%%%%%%%%%%%%%%%%%%%%%%%%%%%%%%%%%%%%%%%%%%%%%%
For   $(q,p) \in  {\mathcal J} \times  [1, \infty)$, we define the function
 $$ \phi (p,q) = e^{\tilde{P}(pq)-p\tilde{P}(q)}. $$  
 and for $q \in  {\mathcal J} $ and $u\in T$ , we define the sequence 
$$Y_n(u,q)=  \E \Big(\displaystyle\sum_{i=1}^N e^{  \left\langle q | X_i \right\rangle} \Big)^{-n}\displaystyle\sum_{v\in T_n(u)} e^{ \left\langle q| S_{|u|+n}(uv)-S_{|u|}(u)\right\rangle},\quad (n\ge 1).$$
When $u = \emptyset$, $Y_n(\emptyset,q)$ will be denoted by $Y_n(q)$. \\
 The sequence $ \big( Y_n(u,q) \big)_{n\geq 1}$ is a positive martingale with expectation $1$, which converges almost surely and in $L^1$ norm to a positive random variable  $Y(u,q)$ (see \cite{KP,Big1} or \cite[Theorem 1]{Big2}). However, our study will need the almost sure simultaneous convergence of  these martingales to positive limits (see Proposition~\ref{pp1}(1)). 
\medskip

Let us  state two propositions, the proof of which is postponed to the end of this section. The uniform convergence part  of  Proposition \ref{pp1} is essentially Theorem 2 of \cite{Big2}, with slightly different assumptions. However, for the reader's convenience, and since the method used by Biggins will be used also in proving Propositions~\ref{pp2} and~\ref{pp3}, we will include its proof. The second part of  Proposition \ref{pp1} defines the family of Mandelbrot measures built simultaneously to control the Hausdorff dimensions of the sets $E(\nabla P(q))$, $q\in {\mathcal J}$, from below. Then Proposition \ref{pp2} introduces suitable logarithmic moment generating functions associated with these measures to get the desired  lower bounds via large deviations inequalities. 

\begin{prop}\label{pp1} 
\begin{enumerate}
\item  Let $K$ be a compact subset of ${\mathcal J}$. There exists  $p_K\in (1,2]$ such that  for all $u \in  \bigcup_{n\ge 0}\N_+^n$, the continuous functions $q\in K\mapsto Y_n(u,q)$ converge  uniformly, almost surely  and in $L_{p_K}$ norm, to a limit $q\in K\mapsto Y(u,q)$. In particular,
 $\E(\displaystyle\sup_{q\in K } Y(u,q)^{p_K}) < \infty.$ Moreover, $Y(u,\cdot)$ is positive almost surely.  
 
In addition, for all $n\geq 0$, $\sigma \big (\{ (X_{u1},\cdots,X_{uN(u)}), u\in T_n \}\big )$ and $\sigma \big (\{ Y(u,\cdot), u\in T_{n+1}\} \big )$ are independent, and the random  functions $Y(u,\cdot), u\in T_{n+1}$, are  independent copies of $Y(\cdot)$.\\ 

\item  With  probability  $1$, for all  $q \in {\mathcal J}$, the weights  
$$\mu_q ([u])=\E \Big(\sum_{i=1}^N e^{  \left\langle q | X_i \right\rangle} \Big)^{-|u|}e^{ \left\langle q| S_{|u|}(u)\right\rangle}  Y(u, q)$$
define  a measure on  $\partial T$ . 
\end{enumerate}
\end{prop}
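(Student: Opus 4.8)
The plan is to deduce part (2) from part (1) by checking the Kolmogorov/Carathéodory consistency condition that makes the set function $\mu_q$ extend to a Borel measure on the compact space $\partial T$. Concretely, I would first verify finite additivity along the tree: for a fixed $q\in\mathcal J$ and a word $u\in T$, the recursive structure of $S_{|u|}$ and of the martingale $Y_n(u,\cdot)$ gives
\[
Y(u,q)=\Big(\sum_{i=1}^{N(u)} e^{\langle q\mid X_{ui}\rangle}\Big)\Big(\E\sum_{i=1}^N e^{\langle q\mid X_i\rangle}\Big)^{-1}\sum_{i=1}^{N(u)} \frac{e^{\langle q\mid X_{ui}\rangle}}{\sum_{j=1}^{N(u)} e^{\langle q\mid X_{uj}\rangle}}\,Y(ui,q)\cdot\frac{\sum_{j=1}^{N(u)} e^{\langle q\mid X_{uj}\rangle}}{\sum_{i=1}^{N(u)} e^{\langle q\mid X_{ui}\rangle}},
\]
which after unwinding is just the almost sure identity $Y(u,q)=\big(\E\sum_{i=1}^N e^{\langle q\mid X_i\rangle}\big)^{-1}\sum_{i=1}^{N(u)} e^{\langle q\mid X_{ui}\rangle}\,Y(ui,q)$; multiplying through by $\big(\E\sum e^{\langle q\mid X_i\rangle}\big)^{-|u|}e^{\langle q\mid S_{|u|}(u)\rangle}$ yields exactly $\mu_q([u])=\sum_{i=1}^{N(u)}\mu_q([ui])$. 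Thus $\mu_q$ is additive on the semi-ring of cylinders $\{[u]:u\in T\}$, and since $\partial T$ is compact and each $[u]\cap\partial T$ is open and closed, every cover of $[u]\cap\partial T$ by cylinders has a finite subcover, so finite additivity automatically upgrades to countable additivity on cylinders; Carathéodory's extension theorem then produces the Borel measure $\mu_q$ with the stated masses.

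The one genuinely delicate point is the phrase \emph{with probability $1$, for all $q\in\mathcal J$}: the recursion for $Y(u,\cdot)$ holds for each fixed $q$ on an event of full measure, and I need a single event on which it holds simultaneously for all $q\in\mathcal J$ and all $u$. This is where part (1) of Proposition~\ref{pp1} does the work: covering $\mathcal J$ by a countable increasing union of compacts $K_m$, part (1) gives, almost surely, uniform convergence of $q\mapsto Y_n(u,q)$ on each $K_m$ for every $u$, hence the limit $q\mapsto Y(u,q)$ is continuous on $\mathcal J$ and (being a locally uniform limit of the continuous recursive identities satisfied by $Y_n$) satisfies the recursion identically in $q$ on that one almost sure event. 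Continuity in $q$ also guarantees the masses $\mu_q([u])$ vary measurably/continuously, and positivity of $Y(u,q)$ from part (1) ensures $\mu_q([u])>0$ for every $u\in T$, so $\mu_q$ is genuinely a (nonzero, in fact fully supported) measure on $\partial T$ rather than the zero measure.

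So the skeleton is: (i) record the branching recursion for the martingale limits $Y(u,\cdot)$, valid simultaneously in $q$ and $u$ on the full-probability event furnished by Proposition~\ref{pp1}(1); (ii) translate it into the cylinder-additivity relation $\mu_q([u])=\sum_{i\le N(u)}\mu_q([ui])$; (iii) invoke compactness of $\partial T$ to pass from finite to countable additivity on the cylinder algebra; (iv) apply Carathéodory extension. I expect step (i) — pinning down the \emph{simultaneous-in-$q$} recursion from the fixed-$q$ martingale decomposition, via the uniform convergence — to be the main obstacle; steps (ii)–(iv) are routine once the recursion is in hand.
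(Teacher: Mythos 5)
Your handling of part (2) is correct and is essentially the paper's own route: the paper disposes of (2) in one line via the branching identity $Y_{n+1}(u,q)=\sum_{i=1}^{N_u}e^{\langle q|X_{ui}\rangle-\widetilde P(q)}Y_n(ui,q)$, and your elaboration (cylinder additivity $\mu_q([u])=\sum_{i\le N_u}\mu_q([ui])$, then compactness of $\partial T$ with clopen cylinders to upgrade finite to countable additivity, then Carath\'eodory) is a sound way to spell that out. Note, though, that the recursion at the level of the $Y_n$'s is a deterministic algebraic identity in $\omega$ and $q$, so the only thing the single full-measure event must supply is the simultaneous-in-$q$ convergence; that point is less delicate than you suggest.

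The genuine gap is that you prove only part (2) and take part (1) as an input, whereas part (1) is the substantive content of the proposition and of its proof in the paper: the existence of one exponent $p_K\in(1,2]$ for which $q\in K\mapsto Y_n(u,q)$ converges uniformly, almost surely and in $L^{p_K}$, the finiteness of $\E\big(\sup_{q\in K}Y(u,q)^{p_K}\big)$, the almost sure positivity of $Y(u,\cdot)$, and the independence structure. None of this follows from the premeasure-extension argument. The paper's proof chooses $p_K$ via Lemmas~\ref{l1} and~\ref{l2} so that $\sup_{q\in K}\phi(p_K,q)<1$, extends $Y_n(u,\cdot)$ analytically to a complex neighborhood $V$ of $K$ on which $\sup_{z\in V}\phi(p_K,z)<1$ and $A_V<\infty$, bounds $\E|Y_n(z)-Y_{n-1}(z)|^{p_K}$ geometrically in $n$ by Lemma~\ref{ll1}, converts these pointwise estimates into estimates on $\sup_{z\in D(z_0,\rho)}|Y_n(z)-Y_{n-1}(z)|$ using the several-variables Cauchy formula (Theorem~\ref{Cauchy}) together with Jensen and Fubini, and sums the resulting series of $L^{p_K}$ norms; positivity of the limit then needs a separate dyadic-subcube tail-event argument combined with the $L^1$ convergence at a single parameter, and the independence claims come from the recursive construction. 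Without this (or some substitute argument for the uniform convergence and positivity), your proof of the proposition is incomplete, since even your step (i) for part (2) explicitly relies on the unproved part (1).
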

For $q\in {\mathcal J}$, let 
$$L_n(q,\lambda)=\frac{1}{n} \log \int_{\partial T} \exp \big(\left\langle \lambda | S_n(t) \right\rangle \big) d\mu_q(t), \;\;\;(\lambda\in \R^d),$$
and 
$$L(q, \lambda) = \displaystyle\limsup_{n\ra\infty} L_n(q,\lambda). $$
\begin{prop}\label{pp2}
Let $K$ be a compact subset of ${\mathcal J}$. There exists a compact neighborhood $\Lambda$ of the origin such that, with probability $1$,
\begin{equation}\label{eq1}
\displaystyle \lim_{n\ra\infty} \sup_{\lambda \in \Lambda}\sup_{q\in K} |L_n(q,\lambda)-(\widetilde P(q+\lambda)-\widetilde P (q))| = 0, 
\end{equation}
in particular $L(q,\lambda)=\widetilde P(q+\lambda)-\widetilde P (q)$ for $(q,\lambda)\in K\times \Lambda$. 
\end{prop}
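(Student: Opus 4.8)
\textbf{Proof proposal for Proposition~\ref{pp2}.}

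The plan is to follow the Biggins-type argument for uniform convergence of martingales, applied here not to $Y_n(q)$ but to the closely related "tilted" martingales that naturally encode $L_n(q,\lambda)$. First I would unwind the definition: by Proposition~\ref{pp1}(2) and the branching structure,
$$\int_{\partial T}\exp\big(\langle\lambda|S_n(t)\rangle\big)\,d\mu_q(t)=\E\Big(\sum_{i=1}^N e^{\langle q|X_i\rangle}\Big)^{-n}\sum_{u\in T_n} e^{\langle q+\lambda|S_n(u)\rangle}\,Y(u,q),$$
so that, writing $m(q)=\E\big(\sum_{i=1}^N e^{\langle q|X_i\rangle}\big)=e^{\widetilde P(q)}$, we get
$$e^{nL_n(q,\lambda)}=\frac{m(q+\lambda)^n}{m(q)^n}\cdot\frac{1}{m(q+\lambda)^n}\sum_{u\in T_n}e^{\langle q+\lambda|S_n(u)\rangle}Y(u,q).$$
Hence $L_n(q,\lambda)-(\widetilde P(q+\lambda)-\widetilde P(q))=\frac1n\log Z_n(q,\lambda)$, where $Z_n(q,\lambda)=m(q+\lambda)^{-n}\sum_{u\in T_n}e^{\langle q+\lambda|S_n(u)\rangle}Y(u,q)$. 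The whole problem thus reduces to showing that $\sup_{\lambda\in\Lambda}\sup_{q\in K}\big|\tfrac1n\log Z_n(q,\lambda)\big|\to 0$ a.s., i.e. that $Z_n(q,\lambda)$ stays bounded away from $0$ and $\infty$ at subexponential scale, uniformly on $K\times\Lambda$.

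For the upper bound, the key observation is that $Z_n(q,\lambda)$ is itself, for fixed $(q,\lambda)$, a nonnegative martingale in $n$ (this is exactly the Biggins martingale associated with the parameter $q+\lambda$, with the a.s.-limits $Y(u,q)$ attached at level $n$, whose conditional expectation given $\mathcal F_n$ reproduces $Y_n(q)$-type identities and collapses to $Z_n$). So I would: (i) choose a compact neighborhood $\Lambda$ of $0$ small enough that $K+\Lambda$ is still contained in $\mathcal J$, and pick $p=p_{K+\Lambda}\in(1,2]$ from Proposition~\ref{pp1}(1) valid on this larger compact; (ii) show $Z_n(q,\lambda)$ is bounded in $L^p$ uniformly in $(q,\lambda)\in K\times\Lambda$, using the von Bahr--Esseen / Topchii--Vatutin inequality to split the $p$-th moment of the sum over $T_n$ into a "mean part" and a "fluctuation part", the fluctuation part being summable in $n$ precisely because $p>1$ and $(q+\lambda)\in\Omega^1_p$; here one uses $\E(\sup_{q'\in K+\Lambda}Y(q')^p)<\infty$ from Proposition~\ref{pp1}(1); (iii) upgrade pointwise control to uniform control on the compact $K\times\Lambda$ by a standard equicontinuity/chaining argument — estimate $\E\big(\sup|Z_n(q,\lambda)-Z_n(q',\lambda')|^p\big)$ in terms of $\|(q,\lambda)-(q',\lambda')\|$ using analyticity of $\widetilde P$ (finite on all of $\R^d$) and the $L^p$ bounds on derivatives, then invoke a Kolmogorov-type continuity criterion to get a uniform (in $n$) modulus of continuity, and combine with a net argument and Borel--Cantelli to obtain $\sup_{K\times\Lambda}Z_n\le e^{o(n)}$ a.s. Actually, since each $Z_n(q,\lambda)$ converges a.s. and in $L^p$ to some limit $Z_\infty(q,\lambda)$, and the convergence is uniform on $K\times\Lambda$ by the same equicontinuity argument, we even get $\sup_{K\times\Lambda}Z_n=O(1)$, which is more than enough for the $\frac1n\log$ to vanish.

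For the lower bound — showing $\inf_{K\times\Lambda}Z_n\ge e^{-o(n)}$, equivalently that the limit $Z_\infty(q,\lambda)$ is a.s. positive, simultaneously over $(q,\lambda)\in K\times\Lambda$ — I would argue exactly as for the positivity of $Y(\cdot)$ in Proposition~\ref{pp1}: the uniform limit $Z_\infty$ is continuous on the compact $K\times\Lambda$, and for each fixed $(q,\lambda)$ it satisfies a smoothing/fixed-point equation of the form $Z_\infty(q,\lambda)=\sum_{i=1}^N \frac{e^{\langle q+\lambda|X_i\rangle}}{m(q+\lambda)}\,Z_\infty^{(i)}(q,\lambda)$ with i.i.d. copies (inherited from the independence statement in Proposition~\ref{pp1}(1)); since its mean is $1$ it is not a.s. zero, and the standard $0$--$1$ law for such recursions forces $\mathbb P(Z_\infty(q,\lambda)>0)=1$ for each $(q,\lambda)$, hence a.s. on a countable dense set, hence everywhere on $K\times\Lambda$ by continuity and the fact that $\{Z_\infty=0\}$ is closed and, being sub-multiplicative along the tree, would propagate — so the infimum over the compact is a.s.\ strictly positive. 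Combining the two bounds gives $\sup_{K\times\Lambda}|\frac1n\log Z_n|\to 0$ a.s., which is \eqref{eq1}; letting $n\to\infty$ yields $L(q,\lambda)=\widetilde P(q+\lambda)-\widetilde P(q)$ on $K\times\Lambda$.

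The main obstacle I anticipate is step (ii)--(iii) of the upper bound: getting the $L^p$ estimates to be genuinely \emph{uniform} in both $q$ (over $K$) \emph{and} $\lambda$ (over $\Lambda$) and \emph{summable in $n$} at once. The delicate point is that the natural martingale here carries the parameter $q+\lambda$ in the exponential weights but the \emph{random} factors $Y(u,q)$ carry only $q$; one must check that the mismatch is harmless, i.e. that $\sup_{q\in K,\lambda\in\Lambda}\E\big(|\sum_{i=1}^N e^{\langle q+\lambda|X_i\rangle}Y^{(i)}(q)|^p\big)/m(q+\lambda)^p<1$ or at least that the excess over $1$ decays so the fluctuation series converges — this is where the choice of $\Lambda$ small and the inclusion $K+\Lambda\subset\Omega^1_p$ are used crucially. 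Once this single uniform $L^p$ bound is in hand, the continuity-in-parameters and the net argument are routine given that $\widetilde P$ is smooth everywhere.
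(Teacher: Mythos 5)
Your reduction is exactly the paper's: you introduce the same quantity $Z_n(q,\lambda)=\sum_{u\in T_n}e^{\langle q+\lambda|S_n(u)\rangle-n\widetilde P(q+\lambda)}Y(u,q)$, observe $L_n(q,\lambda)-(\widetilde P(q+\lambda)-\widetilde P(q))=\frac1n\log Z_n(q,\lambda)$, choose $\Lambda$ so that $K+\Lambda\subset{\mathcal J}$, and aim at a.s.\ uniform convergence of $Z_n$ on $K\times\Lambda$ to a positive limit, with the increments $Z_n-Z_{n-1}$ controlled in $L^{p}$, $p\in(1,2]$, via a von Bahr--Esseen type inequality (the paper's Lemma~\ref{ll2}/\ref{ll1}) and the contraction $\phi(p,q+\lambda)<1$. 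Up to that point you match the paper (incidentally, $Z_n$ is not a martingale for the natural filtration, since $Y(u,q)$ depends on the whole subtree below $u$; but neither you nor the paper actually needs that, only the increment bounds).

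The genuine gap is your step (iii), the passage from pointwise $L^p$ bounds to a bound on $\E\bigl(\sup_{K\times\Lambda}|Z_n-Z_{n-1}|^{p}\bigr)$ (or to a uniform modulus of continuity). A Kolmogorov-type continuity criterion or a chaining/net argument based on parameter derivatives requires moment exponents exceeding the dimension of the parameter space, here $2d\ge 4$ (already $d\ge2$ is fatal), while Lemma~\ref{ll2} confines you to $p\le 2$; moreover the Lipschitz constant of $Z_n$ in $q$ involves $\partial_q Y(u,q)$, which you have no handle on by real-variable means. This is precisely the obstruction the author singles out in Remark (2) of Section~\ref{remarks} (the Sobolev embedding $W^{1,p}\hookrightarrow C^{0,\alpha}$ would need $p>d\ge2$), and it is why the paper instead extends $Z_n$ analytically to a polydisc neighborhood of $K\times\Lambda$ in $\C^d\times\C^d$ and uses the Cauchy formula (Theorem~\ref{Cauchy}) to dominate $\sup_{D(\tilde z_0,\rho)}|Z_n-Z_{n-1}|$ by an integral over the distinguished boundary, so that the uniform $L^{p_R}$ bound follows from the pointwise one with no loss and stays geometrically summable in $n$. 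Without this (or some substitute giving $\E\sup|Z_n-Z_{n-1}|^p\lesssim r^n$), your ``$\sup_{K\times\Lambda}Z_n\le e^{o(n)}$'' is not established. A secondary gap: for the lower bound you deduce positivity of the limit everywhere on $K\times\Lambda$ from positivity on a countable dense set plus continuity, which is not a valid implication (a continuous nonnegative function can vanish off a dense set); the paper's argument for $Y$, which is to be reproduced for $Z$, is the tail-event/nested dyadic cube argument combined with $L^1$-convergence at the limiting parameter, and you would need to run that argument, not the density one.
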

%%%%
\begin{coro}\label{cc1}
With probability $1$, for all $q\in {\mathcal J}$, for $\mu_q$-almost every $t\in \partial T$,  
 $$\displaystyle\lim_{n\ra\infty} \frac{S_n(t)}{n} = \nabla\widetilde P(q).$$
\end{coro}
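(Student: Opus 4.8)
The plan is to derive Corollary~\ref{cc1} from Proposition~\ref{pp2} by a standard Gärtner--Ellis / Chernoff argument applied to the measures $\mu_q$, exploiting that the limiting log-moment generating function $L(q,\lambda)=\widetilde P(q+\lambda)-\widetilde P(q)$ is, for each fixed $q\in\mathcal J$, differentiable at $\lambda=0$ with gradient $\nabla\widetilde P(q)$. Concretely, fix a compact $K\subset\mathcal J$ and let $\Lambda$ be the compact neighborhood of the origin given by Proposition~\ref{pp2}; work on the probability-one event where \eqref{eq1} holds simultaneously for all $q\in K$. On this event, for every $q\in K$ and every $\lambda\in\Lambda$ we have the exponential Chebyshev bound
$$
\mu_q\big(\{t : \langle \lambda\,|\,S_n(t)-n\nabla\widetilde P(q)\rangle \ge n\delta\}\big)
\le \exp\!\big(-n\langle \lambda|\nabla\widetilde P(q)\rangle - n\delta\big)\int_{\partial T} e^{\langle\lambda|S_n(t)\rangle}\,d\mu_q(t),
$$
so that $\frac1n\log$ of the left side is at most $nL_n(q,\lambda)-\langle\lambda|\nabla\widetilde P(q)\rangle-\delta$, which by \eqref{eq1} tends to $\widetilde P(q+\lambda)-\widetilde P(q)-\langle\lambda|\nabla\widetilde P(q)\rangle-\delta$. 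For each fixed direction and small enough $\lambda$ along it, the first three terms are $o(\|\lambda\|)$, hence choosing $\lambda$ of the form $s\theta$ with $s>0$ small and $\theta$ ranging over a finite set of directions covering the sphere makes the exponent strictly negative; summing the resulting geometric-type bounds over $n$ gives, via Borel--Cantelli applied under $\mu_q$ (or directly since the series of measures converges), that $\mu_q$-a.e.\ $t$ satisfies $\limsup_n \tfrac1n\langle\theta|S_n(t)-n\nabla\widetilde P(q)\rangle\le 0$ for each such $\theta$, and likewise with $-\theta$; combining finitely many directions yields $\lim_n S_n(t)/n=\nabla\widetilde P(q)$ for $\mu_q$-a.e.\ $t$.

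It remains to upgrade this from ``each fixed $q$'' to ``all $q\in\mathcal J$ simultaneously, almost surely.'' Here I would use the continuity in $q$ built into Proposition~\ref{pp1}: the map $q\mapsto Y(q)$ is a.s.\ continuous and positive on $K$, and $q\mapsto\mu_q$ depends continuously (in total variation on cylinders, say) on $q$, while $q\mapsto\nabla\widetilde P(q)$ is continuous. So one first proves the statement for every $q$ in a fixed countable dense subset $D$ of $\mathcal J$ (a countable union of null events is null), and then for a general $q\in\mathcal J$ one approximates by $q_k\in D$, $q_k\to q$; the uniform control \eqref{eq1} on a compact neighborhood of $q$ lets one pass to the limit in the Chernoff bounds with constants uniform in a neighborhood, so the exceptional $\mu_q$-null set can be taken inside a single $\mu$-null set obtained from the countable family. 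Alternatively, and perhaps more cleanly, one runs the whole Chernoff argument with the suprema over $q\in K$ already in place, as supplied by \eqref{eq1}, getting for each finite direction set $\Theta$ and each small $s>0$ a bound
$$
\sup_{q\in K}\ \frac1n\log\mu_q\big(\{t : \langle s\theta\,|\,S_n(t)-n\nabla\widetilde P(q)\rangle\ge n\delta\}\big)\ \le\ -c(s,\delta,\theta)<0
$$
for $n$ large, which is summable; this handles all $q\in K$ at once, and then $\mathcal J$ is exhausted by countably many such compacts $K$.

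The main obstacle I anticipate is not the large-deviation upper bound itself but the simultaneity over $q$: one must be careful that the $\mu_q$-null exceptional set can be chosen measurably and uniformly enough that a single almost sure event works for all $q\in\mathcal J$. This is exactly where the strength of Proposition~\ref{pp1}(1) (uniform a.s.\ convergence of $q\mapsto Y_n(u,q)$ on compacts, with $L^{p_K}$ control and a.s.\ positivity of the limit) and the uniform-in-$q$ statement \eqref{eq1} of Proposition~\ref{pp2} are indispensable; without them one would only get the weaker ``for each fixed $q$, a.s.'' conclusion. A secondary technical point is that only the one-sided (upper) large deviation estimate is needed in each of finitely many directions $\pm\theta$, so there is no need to verify a full Gärtner--Ellis lower bound or steepness of $L(q,\cdot)$; differentiability of $\lambda\mapsto\widetilde P(q+\lambda)$ at $0$, which holds since $\widetilde P$ is finite hence smooth on $\R^d$, is all that the argument consumes.
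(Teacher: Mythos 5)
Your proposal is correct and essentially the same as the paper's proof: the paper works on the almost sure event provided by Proposition~\ref{pp2}, applies the G\"artner--Ellis \emph{upper} bound (i.e.\ exactly your Chernoff estimates, packaged through $L_q^*$, the subgradient argument and upper semi-continuity) to the push-forwards $\nu_{q,n}$ of $\mu_q$ under $t\mapsto S_n(t)/n$, and concludes by Borel--Cantelli with respect to $\mu_q$ and $\epsilon\to 0$ along a countable sequence, the simultaneity in $q$ coming for free from the uniformity in \eqref{eq1} exactly as in your preferred ``suprema over $q\in K$, then exhaust $\mathcal J$ by countably many compacts'' alternative (the countable dense set of $q$'s and approximation $\mu_{q_k}\to\mu_q$ is neither needed nor easy to justify). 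One bookkeeping point to fix: with the event written as $\langle s\theta\,|\,S_n(t)-n\nabla\widetilde P(q)\rangle\ge n\delta$ you only control deviations of size $\delta/s$, so instead fix a threshold $\eta>0$, bound $\mu_q\big(\{t:\langle\theta|S_n(t)-n\nabla\widetilde P(q)\rangle\ge n\eta\}\big)$ by $\exp\big(n[\widetilde P(q+s\theta)-\widetilde P(q)-s\langle\theta|\nabla\widetilde P(q)\rangle-s\eta]+n o(1)\big)$ with $s$ small so the bracket is at most $-s\eta/2$, and then let $\eta$ tend to $0$ along a countable sequence.
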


\begin{proof} It follows from Proposition~\ref{pp2} that there exists $\Omega'\subset \Omega$ with $\mathbb P(\Omega')=1$, and  such that for all $\omega\in\Omega'$, for all $q\in {\mathcal J}$, there exists a neighborhood of $0$ over which $L_n(q,\lambda)$ converges uniformly in $\lambda$ towards $L(q,\lambda)=\widetilde P(q+\lambda)-\widetilde P (q)$. 

 For each $\omega\in \Omega'$, let us define for each  $q\in {\mathcal J}$ the  sequence of measures $\{\nu^\omega_{q,n}\}_{n\ge 1}$ as 
 \begin{equation}\label{nuqn}
 \nu^\omega_{q,n}(B)=\mu_q(\{t\in \partial T : \frac{1}{n} S_n(t)  \in B\})
 \end{equation} 
  for all Borel set  $B \subset \R^d$. We denote  $L(q,\lambda)$ by $L_q(\lambda)$. Since 
 $$
 L_n(q,\lambda)=\frac{1}{n}\log\int_{\R^d} \exp(n\langle \lambda|u\rangle)\, d\nu_{q,n}^\omega(u),
 $$
 applying G\"artner-Ellis Theorem \cite[Thm. 2.3.6]{De-Zei},   for all closed subsets $\Gamma$ of $\R^d$, we have for all $q\in {\mathcal J}$
$$\displaystyle\limsup_{n\ra\infty }\frac{1}{n} \log \nu^\omega_{q,n}(\Gamma) \leq \sup_{\alpha\in \Gamma} L_q^*(\alpha).$$                      

Let $\epsilon>0$, and for each $q\in {\mathcal J}$ let  $A_{q,\epsilon} =\big\{\alpha\in\R^d:d(\alpha, \nabla L_q(0)) \ge  \epsilon) \big\}$, where $d$ is a Euclidean distance in $\R^d$. We have  $\displaystyle\limsup_{n\ra\infty }\frac{1}{n} \log \nu_{q,n}^\omega (A_{q,\epsilon})  \leq \sup_{\alpha\in A_{q,\epsilon}} L_q^*(\alpha)$. In addition, since $L_q(\lambda)=\widetilde P(q+\lambda)-\widetilde P (q)$ in a neighborhood of $0$,  we have $\nabla L_q(0)=\nabla\widetilde P(q)$  and $L_q^* (\nabla L_q(0))=0=\max L_q^*$. Moreover, since $L_q$ is differentiable at~$0$, we have $L_q^*(\alpha)<L_q^*(\nabla L_q(0))$ for all $\alpha\neq \nabla L_q(0)$. Indeed, suppose that $L_q^*(\alpha)=0$; then it follows from the definition of the Legendre transformation and the fact that $L_q(0)=0$, that $$\forall \lambda \in \R^d, \;\; L_q(\lambda) \geq L_q(0) + \left\langle \lambda | \alpha \right\rangle,$$
hence $\alpha$ belongs to the  subgradient of $L_q$ at $0$, which from Proposition \ref{pppp1} reduces to $\{\nabla L_q(0)\}$.

Now, due to the upper semi-continuity of the concave function $L_q^*$, we have $ \gamma_{q,\epsilon}=\sup_{\alpha\in A_{q,\epsilon}} L_q^*(\alpha)<0$. 

Consequently,  for all $q\in {\mathcal J}$,  for $n$ large enough, $\nu_{q,n}^\omega (A_{q,\epsilon}) \leq  e^{n\gamma_{q,\epsilon/2}}$, i.e. 
$$\mu_q \big( \big\{t\in \partial T : \frac{1}{n} S_n(t)  \in A_{q,\epsilon} \big\} \big) \le  e^{n\gamma_{q,\epsilon}/2}.$$
Then it follows from the Borel-Cantelli Lemma (applied with respect to $\mu_q$) that for all $q\in {\mathcal J}$, 
for $\mu_q$-almost every $t\in\partial T$, we have $\frac{1}{n} S_n(t) \in B \big(\nabla\widetilde P(q), \epsilon \big) $ for $n$ large enough. Letting $\epsilon$ tend to 0 along a countable sequence yields the desired conclusion. 
\end{proof}
 
\begin{coro}
With probability 1, for all $q\in {\mathcal J}$, the sequence of random measure $(\nu_{q,n}^\omega)_{n\ge 1}$ defined in \eqref{nuqn} satisfies the following large deviation property: for all $\lambda$ in a neighborhood of $0$, 
$$ \displaystyle\lim_{\epsilon \ra 0}\lim_{n\ra\infty} \frac{1}{n} \log \nu_{q,n}^\omega \big(B(\nabla L_q(\lambda),  \epsilon)\big) = L_q^*(\nabla L_q(\lambda)),$$   
where $L(q,\lambda)=\widetilde P(q+\lambda)-\widetilde P (q)$.
\end{coro}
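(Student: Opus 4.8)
The plan is to read this off from the uniform convergence of Proposition~\ref{pp2} together with the G\"artner--Ellis theorem, following the exact pattern of the proof of Corollary~\ref{cc1}: there one only identifies the barycenter $\nabla L_q(0)=\nabla\widetilde P(q)$ of $\nu_{q,n}^\omega$, while here one wants the precise exponential rate at which $\nu_{q,n}^\omega$ charges a small ball around the general point $\nabla L_q(\lambda)=\nabla\widetilde P(q+\lambda)$. So the proof has two halves, an upper bound (for the closed ball) and a lower bound (at an exposed point), both supplied by the same citation \cite[Thm.~2.3.6]{De-Zei}.

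First I would fix the good event. Exhausting ${\mathcal J}$ by an increasing sequence of compact sets and applying Proposition~\ref{pp2} to each, one gets an event $\Omega'$ with $\mathbb P(\Omega')=1$ such that for every $\omega\in\Omega'$ and every $q\in{\mathcal J}$ there is a neighborhood $\Lambda_q$ of the origin, with $0$ in its interior, on which $L_n(q,\cdot)$ converges uniformly to $L_q(\cdot)=\widetilde P(q+\cdot)-\widetilde P(q)$; by \eqref{eq1} this is genuine convergence, so $L_q(\lambda)=\lim_{n\ra\infty}\frac1n\log\int_{\R^d}e^{n\langle\lambda|u\rangle}\,d\nu_{q,n}^\omega(u)$ for $\lambda\in\Lambda_q$. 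Work on $\Omega'$, fix $q\in{\mathcal J}$ and $\lambda$ in the interior of $\Lambda_q$, and set $\alpha=\nabla L_q(\lambda)$. For the upper bound, the G\"artner--Ellis estimate already recorded in the proof of Corollary~\ref{cc1}, applied to the closed (hence compact) ball $\overline{B(\alpha,\epsilon)}$, gives
$$\limsup_{n\ra\infty}\frac1n\log\nu_{q,n}^\omega\big(B(\alpha,\epsilon)\big)\;\le\;\sup_{\beta\in\overline{B(\alpha,\epsilon)}}L_q^*(\beta),$$
and, $L_q^*$ being concave and upper semicontinuous with $\alpha$ in every such ball, the right-hand side decreases to $L_q^*(\alpha)$ as $\epsilon\to0$.

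For the lower bound I would check that $\alpha$ is an exposed point of $L_q^*$ with exposing hyperplane $\lambda$. Since $\widetilde P$ is smooth on $\R^d$, $L_q$ is differentiable at $\lambda$, so by Proposition~\ref{pppp1} its subgradient at $\lambda$ reduces to $\{\nabla L_q(\lambda)\}=\{\alpha\}$; the Fenchel equality then gives $\langle\lambda|\alpha\rangle+L_q^*(\alpha)=L_q(\lambda)>\langle\lambda|\beta\rangle+L_q^*(\beta)$ for every $\beta\neq\alpha$, which is exactly the exposedness condition with exposing hyperplane $\lambda$ (recall that in the paper's sign convention the rate function is $-L_q^*$, and $\lambda$ lies in the interior $\R^d$ of the domain of $L_q$). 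The lower-bound part of \cite[Thm.~2.3.6]{De-Zei} then yields, for every $\epsilon>0$,
$$\liminf_{n\ra\infty}\frac1n\log\nu_{q,n}^\omega\big(B(\alpha,\epsilon)\big)\;\ge\;L_q^*(\alpha).$$
Combining the two displays, for each fixed $\epsilon>0$ the quantity $\frac1n\log\nu_{q,n}^\omega(B(\alpha,\epsilon))$ is trapped, up to $o(1)$, between $L_q^*(\alpha)$ and $\sup_{\overline{B(\alpha,\epsilon)}}L_q^*$; letting $\epsilon\to0$ drives both ends to $L_q^*(\alpha)$, which proves the claimed equality and shows that the iterated limit is unambiguous.

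I expect no real obstacle here: the computational core is Proposition~\ref{pp2}, already in hand, and the one new point is the exposedness of $\alpha$, i.e. the local strict-convexity statement of Proposition~\ref{pppp1} transported from $0$ to $\lambda$ — exactly the step that, at $\lambda=0$, underlies Corollary~\ref{cc1}. The remaining care is the bookkeeping that a single event $\Omega'$ serves all $q\in{\mathcal J}$ simultaneously, handled just as in Corollary~\ref{cc1}. A minor technical caveat is that $L_n(q,\cdot)$ is controlled only near $0$ rather than on all of $\R^d$, but since only bounded balls $B(\alpha,\epsilon)$ enter the statement, the compact (local) form of the G\"artner--Ellis theorem is all that is used.
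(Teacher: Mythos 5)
Your argument is correct and takes essentially the same route as the paper: the paper proves this corollary by simply invoking the G\"artner--Ellis theorem of \cite{De-Zei} on the strength of Proposition~\ref{pp2}, and your proposal is a fleshed-out version of that citation (compact-set upper bound plus lower bound at the exposed point $\nabla L_q(\lambda)$, whose exposedness with exposing hyperplane $\lambda$ you verify via Proposition~\ref{pppp1}, exactly the mechanism underlying Corollary~\ref{cc1}). The only caveat --- that $L_n(q,\cdot)$ is known to converge only on a neighborhood of $0$, so a local form of the G\"artner--Ellis bounds must be used --- is glossed over by the paper as well, and you flag and handle it appropriately.
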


\begin{proof}
It is a consequence of G\"artner-Ellis theorem (see \cite{De-Zei}).
\end{proof}

We need a last proposition to get the lower bounds in Theorem~\ref{tt}. Its proof will end the section. 
\begin{prop}\label{pp3}
With probability $1$, for all $q\in {\mathcal J}$, for $\mu_q$-almost every $t\in \partial T$,
$$\displaystyle\lim_{n\rightarrow\infty}\frac{\log Y(t_{|n},q)}{n} =0.$$ 
\end{prop}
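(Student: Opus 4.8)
The plan is to fix a compact set $K \subset {\mathcal J}$ and prove the statement simultaneously for all $q \in K$, then exhaust ${\mathcal J}$ by countably many such compacta. The quantity $\log Y(t_{|n},q)/n$ should go to $0$ because $Y(\cdot,q)$ is an $L^1$-bounded (indeed $L_{p_K}$-bounded by Proposition~\ref{pp1}) random function, so its values along a typical branch cannot grow or decay exponentially. The natural route is a Borel--Cantelli argument carried out under the annealed measure $\mathbb{Q}$ on $\Omega \times \partial T$ whose marginal on $\Omega$ is weighted by $\mu_q(\partial T)$ and whose disintegration along $\partial T$ is $\mu_q$ — the so-called Peyri\`ere or spine measure associated with the Mandelbrot measure $\mu_q$. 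Under $\mathbb{Q}$, the law of $Y(t_{|n},q)$ can be computed via the size-biasing that defines $\mu_q$: using the recursion $\mu_q([u]) = (\E\sum_i e^{\langle q|X_i\rangle})^{-|u|} e^{\langle q|S_{|u|}(u)\rangle} Y(u,q)$ together with the branching structure of Proposition~\ref{pp1}, one gets a manageable expression for $\mathbb{E}_{\mathbb{Q}}[\varphi(Y(t_{|n},q))]$.

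Concretely, first I would establish a one-sided tail bound: for the upper bound $\limsup \log Y(t_{|n},q)/n \le 0$, I would show $\mathbb{E}_{\mathbb{Q}}[Y(t_{|n},q)^{-\delta}]$ or rather control $\mathbb{Q}(Y(t_{|n},q) > e^{n\epsilon})$ by a union bound over $u \in T_n$ of $\mu_q([u]) \mathbf{1}_{\{Y(u,q) > e^{n\epsilon}\}}$, whose $\mathbb{P}$-expectation is $\mathbb{E}[Y(u,q)\mathbf{1}_{\{Y(u,q)>e^{n\epsilon}\}}]$ summed against the total mass; by the $L_{p_K}$ bound on $\sup_{q\in K} Y(u,q)$ and Markov's inequality this is $O(e^{-n\epsilon(p_K-1)})$, which is summable, so Borel--Cantelli under $\mathbb{Q}$ gives the $\limsup$ bound for $\mathbb{P}$-a.e.\ $\omega$ and $\mu_q$-a.e.\ $t$. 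For the lower bound $\liminf \log Y(t_{|n},q)/n \ge 0$, i.e.\ $Y(t_{|n},q)$ is not exponentially small, I would similarly bound $\mathbb{Q}(Y(t_{|n},q) < e^{-n\epsilon})$; here I would use that $Y(u,q)$ is bounded below in a suitable sense — since $Y(u,q) = (\E\sum_i e^{\langle q|X_i\rangle})^{-1}\sum_{i=1}^{N(u)} e^{\langle q|X_{ui}\rangle} Y(ui,q)$, a single child already forces a lower bound, and by almost sure positivity of $Y$ plus a moment estimate on $\log^- Y$ one controls the probability of an exponentially small value. The uniformity in $q \in K$ is obtained, as in Propositions~\ref{pp2} and~\ref{pp3}'s companions, by first proving the bound on a finite $\epsilon$-net of $K$ and then using the uniform (in $q$) convergence and continuity of $q \mapsto Y(u,q)$ from Proposition~\ref{pp1} to fill in; alternatively one works directly with $\sup_{q\in K} Y(u,q)$ and $\inf_{q\in K} Y(u,q)$, both of which have the needed integrability.

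The main obstacle I anticipate is the lower bound $\liminf \log Y(t_{|n},q)/n \ge 0$ uniformly in $q$: controlling $\mathbb{Q}(\inf_{q\in K} Y(t_{|n},q) < e^{-n\epsilon})$ requires a genuine lower-tail estimate on the martingale limit $Y(\cdot,q)$, uniform over the compact $K$, which is more delicate than the $L^{p}$ upper-tail control already in hand. The standard device is to exploit the functional equation: one child $ui_0$ of $u$ contributes $(\E\sum_i e^{\langle q|X_i\rangle})^{-1} e^{\langle q|X_{ui_0}\rangle} Y(ui_0,q)$ to $Y(u,q)$, so an exponentially small value of $Y(t_{|n},q)$ propagates down the spine and, combined with the i.i.d.\ structure along the spine under $\mathbb{Q}$, forces an event of probability decaying like a product, hence summable; making this rigorous and uniform in $q$ — in particular checking that $\mathbb{E}_{\mathbb{Q}}[(\log^- Y(t_{|1},q))]$ or an exponential moment thereof is finite and bounded on $K$, using $\widetilde P$ finite on $\R^d$ and the definition of ${\mathcal J}$ — is the technical heart of the argument. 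Once both one-sided bounds hold off a single $\mathbb{P}$-null set for all $q$ in a countable dense subset, continuity in $q$ and a monotone exhaustion of ${\mathcal J}$ by compacta complete the proof.
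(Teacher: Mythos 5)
Your overall frame (exhaust ${\mathcal J}$ by compacta, Borel--Cantelli under the Peyri\`ere/size-biased measure, Markov with the $L^{p_K}$ bound on $\sup_{q\in K}Y(\cdot,q)$ for the upper tail) matches the paper's proof of the $E^+$ half. But you have misdiagnosed the lower bound, and this leaves a real gap. You flag $\mathbb{Q}\big(Y(t_{|n},q)<e^{-n\epsilon}\big)$ as ``the technical heart,'' requiring a genuine lower-tail estimate on $Y$ uniform in $q$, to be obtained by propagating smallness along the spine with moment estimates on $\log^- Y$ --- and you do not carry this out. In fact no lower-tail estimate on $Y$ is needed at all: since $\mu_q([u])=e^{\langle q|S_n(u)\rangle-n\widetilde P(q)}\,Y(u,q)$ already contains the factor $Y(u,q)$, one has $Y(u,q)\mathbf{1}_{\{Y(u,q)<a^{-n}\}}\le a^{-n\nu}Y(u,q)^{1-\nu}$ for $\nu\in(0,1)$, so $\mu_q\{t:Y(t_{|n},q)<a^{-n}\}$ is automatically exponentially small with only $L^1$-type control on $Y$; the size-biasing kills small values of $Y$. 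This is exactly why the paper treats $E^-_{n,a}$ as ``similar'' to $E^+_{n,a}$ (where the same Markov trick with exponent $1+\nu$, $\nu=p_K-1$, is used). As written, your lower-bound half rests on an unproved and unnecessary estimate, so the argument is incomplete even though the obstacle is a phantom.

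The second gap is the uniformity in $q$. The statement is ``a.s., for \emph{all} $q\in{\mathcal J}$, for $\mu_q$-a.e.\ $t$,'' and both the measure $\mu_q$ and the function $Y(\cdot,q)$ vary with $q$; the measures $\mu_q$ are in general mutually singular, so knowing the conclusion on a finite $\epsilon$-net of $K$ does not transfer to nearby $q$ by continuity of $q\mapsto Y(u,q)$ alone --- the exceptional $\mu_{q'}$-null sets say nothing about $\mu_q$. Your alternative, replacing $Y(u,q)$ by $M(u)=\sup_{q\in K}Y(u,q)$, handles only the indicator, not the $q$-dependence of the weights $e^{\langle q|S_n(u)\rangle-n\widetilde P(q)}$; one still has to bound $\E\big(\sup_{q\in K}\sum_{n}\sum_{u\in T_n}e^{\langle q|S_n(u)\rangle-n\widetilde P(q)}M(u)^{p_K}a^{-n(p_K-1)}\big)$, and the supremum cannot simply be exchanged with the expectation. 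The paper resolves precisely this point by extending $H_n(\cdot,p_K-1)$ analytically to a complex neighborhood of $K$, bounding $\E|H_n(z,p_K-1)|\le C_Ka^{-n(p_K-1)/2}$ there, and then using the Cauchy formula (as in Propositions~\ref{pp1} and~\ref{pp2}) to dominate $\sup_z$ by an integral over the distinguished boundary, which makes $\E\sup_{q\in K}$ summable in $n$. Some device of this kind (or a genuinely quantitative chaining argument, which you do not supply) is needed; the net-plus-continuity sketch as stated would not close the proof.
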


\medskip
\noindent\textbf{Proof of the lower bounds in Theorem~\ref{tt}:} From Corollary \ref{cc1}, we have with probability $1$, $\mu_q \big(E (\nabla\widetilde P(q)) \big)=1$.  In addition, with probability $1$, for $\mu_q$-almost every  $t\in E ( \nabla\widetilde P(q) ) $, from the same  corollary and  Proposition \ref{pp3}, we have 
\begin{eqnarray*}
\lim_{n\ra\infty} \frac{\log( \mu_q[t_{|n}])}{\mathrm{\log (diam}([t_{|n}]))}&=& \lim_{n\ra\infty} \frac{-1}{n}\log\Big(\exp \big(\left\langle q | S_n(t)  \right\rangle -n\widetilde P(q) \big)    Y(t_{|n}, q) \Big) \\
&=& \widetilde P(q)+ \lim_{n\ra\infty} \frac{\left\langle q | S_n(t)  \right\rangle }{-n} -  \lim_{n\ra\infty} \frac{\log  Y(t_{|n}, q) }{n}\\
&=& \widetilde P(q) - \left\langle q| \nabla\tilde{ P}(q) \right\rangle = \widetilde P^*(\nabla\widetilde P(q)). 
\end{eqnarray*} 
We deduce the result from the mass distribution principle (Theorem \ref{Bill}).

\medskip

Now, we  give the proofs of the previous propositions. 
%%%%%%%%%%%%%%%%%%%%%%%%%%%%%%%%%%%%%%%%%%%%%%%%%%%%%%%%%%%%%%%%%%%%%%%%%%%%%%%%%%%%%%%%%%%%%%%%%%%%%%%%%%%%%%%%%%%%%%%%%%%%%%%%%%%%%%%%%%%%%%%%
%%%%%%%%%%%%%%%%%%%%%%%%%%%%%%%%%%%%%%%%%%%%%%%%%%%%%%%%%%%%%%%%%%%%%%%%%%%%%%%%%%%%%%%%%%%%%%%%%%%%%%%%%%%%%%%%%%%%%%%%%%%%%%%%%%%%%%%%%%%%%%%%
%%%%%%%%%%%%%%%%%%%%%%%%%%%%%%%%%%%%%%%%%%%%%%%%%%%%%%%%%%%%%%%%%%%%%%%%%%%%%%%%%%%%%%%%%%%%%%%%%%%%%%%%%%%%%%%%%%%%%%%%%%%%%%%%%%%%%%%%%%%%%%%%
\subsection{Proofs of Propositions~\ref{pp1}, \ref{pp2} and \ref{pp3}}
We start with several lemmas.
\begin {lem}\label{l1} Recall that, for   $(q,p) \in  {\mathcal J}  \times  [1, \infty)$, $ \phi (p,q) = e^{\tilde{P}(pq)-p\tilde{P}(q)}.$ Then, for all nontrivial compact  $K \subset {\mathcal J} $ there exists  a real number  $1 < p_{K} < 2$ such that for all $1<p\le p_K$ we have 
$$\displaystyle\sup_{q \in  K} \phi (p_{K},q) <  1.$$
\end{lem}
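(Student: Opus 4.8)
\textbf{Proof plan for Lemma \ref{l1}.}

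The plan is to exploit the definition of $\mathcal{J} = J\cap\Omega^1$ together with elementary properties of the function $\phi$. First I would record the key pointwise fact: for each fixed $q$, the map $p\mapsto \log\phi(p,q) = \tilde P(pq)-p\tilde P(q)$ vanishes at $p=1$, and its derivative at $p=1$ equals $\langle q|\nabla\tilde P(q)\rangle - \tilde P(q)$, which is $<0$ precisely when $q\in J$. Hence for $q\in J$ there is an interval $(1,1+\delta_q)$ on which $\phi(p,q)<1$. This gives the conclusion pointwise; the work is to make it uniform over a compact $K\subset\mathcal J$.

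Next I would invoke the constraint coming from $\Omega^1$. By definition of $\Omega^1 = \bigcup_{\gamma\in(1,2]}\Omega^1_\gamma$ and compactness, $K$ is covered by finitely many of the open sets $\Omega^1_\gamma$, so there is a single $\gamma_0\in(1,2]$ with $K\subset\Omega^1_{\gamma_0}$; moreover, since $\Omega^1_{\gamma_0}$ is open and contains the compact $K$, one can slightly enlarge to a compact $K'$ with $K\subset \mathrm{int}(K')\subset K'\subset\Omega^1_{\gamma_0}$, which ensures $\tilde P(pq)<\infty$ for $q\in K$ and $p\in[1,\gamma_0]$ — in fact $\tilde P$ is finite on all of $\R^d$ by hypothesis, so finiteness is automatic; the real point of $K\subset\Omega^1_{\gamma_0}$ will be that it caps the exponent $p_K$ by $\gamma_0$, which is what is actually needed later (in Propositions \ref{pp1}–\ref{pp3}) rather than in this lemma per se. For the lemma itself I would then argue by a compactness/continuity argument: the function $(p,q)\mapsto \log\phi(p,q)$ is continuous on $[1,\gamma_0]\times K$ (continuity of $\tilde P$, which holds as $\tilde P$ is finite and convex on $\R^d$), it equals $0$ on $\{1\}\times K$, and $\partial_p\log\phi(p,q)|_{p=1} = \langle q|\nabla\tilde P(q)\rangle-\tilde P(q) < 0$ on $K$ by definition of $J\supset K$. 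By continuity of this partial derivative and compactness of $K$, there is $c>0$ with $\partial_p\log\phi(p,q)|_{p=1}\le -c$ for all $q\in K$, and then, using a uniform bound on the second derivative $\partial_p^2\log\phi = \langle q|\,(\mathrm{Hess}\,\tilde P)(pq)\,q\rangle$ over the compact set $[1,\gamma_0]\times K$, a Taylor expansion at $p=1$ gives $\log\phi(p,q)\le -c(p-1)+C(p-1)^2$ for all such $(p,q)$. Choosing $p_K\in(1,\gamma_0]$ small enough that $-c(p-1)+C(p-1)^2<0$ for all $p\in(1,p_K]$ yields $\sup_{q\in K}\phi(p_K,q)<1$, and the same inequality for all $p\in(1,p_K]$ follows because $\log\phi(p,q)$ is convex in $p$ and vanishes at $p=1$, so it is dominated by its value at $p_K$ on $(1,p_K]$ — or more simply, the displayed Taylor bound applies verbatim with $p$ in place of $p_K$.

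The main obstacle I anticipate is purely technical bookkeeping: justifying the differentiability of $\tilde P$ and the uniform control of its gradient and Hessian on the relevant compact set. Since $\tilde P$ is a finite convex function on $\R^d$ (by the standing hypothesis of the theorem), it is automatically $C^\infty$ in the interior of its domain — here all of $\R^d$ — by standard results on logarithmic moment generating functions (differentiation under the expectation is legitimate because the relevant exponential moments are finite in a neighborhood), so $\nabla\tilde P$ and $\mathrm{Hess}\,\tilde P$ are continuous, hence bounded on compacts; this removes the obstacle. One should also note that $K$ being ``nontrivial'' (nonempty with more than the degenerate structure) is used only to guarantee the construction makes sense; the estimate itself is vacuous-but-true if $K=\emptyset$. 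No serious difficulty remains beyond assembling these pieces.
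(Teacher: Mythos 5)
Your argument is correct, and its core mechanism is the same as the paper's: the strict negativity of $\partial_p\log\phi(p,q)$ at $p=1$ for $q\in J$, made uniform over the compact $K$. Where you differ is in how uniformity is achieved. The paper argues pointwise (for each $q$ pick $p_q>1$ with $\phi(p_q,q)<1$, pass to a neighborhood $V_q$ by continuity, extract a finite subcover, set $p_K=\min_i p_{q_i}$) and then transfers the bound to all $p\in(1,p_K]$ via the log-convexity of $p\mapsto\phi(p,q)$ together with $\phi(1,q)=1$ — no second derivatives are ever needed. You instead get a uniform negative slope $-c$ at $p=1$ on $K$ and a uniform bound on $\partial_p^2\log\phi=\langle q|\mathrm{Hess}\,\widetilde P(pq)\,q\rangle$ on $[1,\gamma_0]\times K$, and conclude by a Taylor estimate $\log\phi(p,q)\le -c(p-1)+C(p-1)^2$. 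Your route is more quantitative (it produces an explicit negative upper bound for $\sup_{q\in K}\log\phi(p,q)$) but requires $C^2$ regularity of $\widetilde P$, which is indeed available here since $\widetilde P$ is finite on all of $\R^d$; the paper's log-convexity trick is slightly more economical and would survive weaker regularity. Two small remarks: your phrase that $\log\phi(\cdot,q)$ on $(1,p_K]$ ``is dominated by its value at $p_K$'' is imprecise — convexity with $\log\phi(1,q)=0$ gives $\log\phi(p,q)\le \theta\,\log\phi(p_K,q)$ with $\theta=(p-1)/(p_K-1)\in(0,1]$, which is negative but not below $\log\phi(p_K,q)$ — though your fallback (apply the Taylor bound verbatim at $p$) fixes this; and your observation that the $\Omega^1$ constraint is not needed for this lemma itself, only to cap $p_K$ later, is accurate (the paper performs that cap in the proof of Proposition \ref{pp1} via Lemma \ref{l2}).
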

\begin{proof} Let  $q \in  {\mathcal J}$,  one has   $\frac{\partial \phi}{\partial p}  (1^+, q) < 0
 $ and there exists  $p_{q}>1$ such that   $\phi (p_{q}, q) < 1$. Therefore, in a neighborhood  $V_{q}$ of $q$,  one has    $\phi (p_q, q') < 1$ for all $q'\in V_q$. If  $K$ is a nontrivial  compact  of  ${\mathcal J}$, it is covered  by a finite number of such $V_{q_{i}}$. Let $p_{K} = \displaystyle \inf_{i} p_{q_{i}}$. If $1<p\le p_K$ and $\sup_{q \in  K} \phi (p,q) \ge 1$, there exists $q\in K$ such that  $\phi (p,q)\ge 1$, and $q\in V_{q_i}$ for some $i$. By log-convexity of the mapping  $p\mapsto  \phi(p,q)$ and the fact that $\phi(1,q)=1$, since $1<p\le p_{q_i}$ we have $\phi (p, q) < 1$, which is a contradiction.  
\end{proof}
\begin{lem}\label{l2}
For all compact $K\subset {\mathcal J}$, there exists $\tilde p_K > 1$ such that, 
$$  \sup_{q\in K}\E \Big( \big(\sum_{i=1}^N e^{\left\langle q| X_i  \right\rangle  } \big)^{\tilde p_K} \Big) < \infty. $$
\end{lem}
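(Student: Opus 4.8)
The plan is to deduce Lemma~\ref{l2} from Lemma~\ref{l1} together with the definition of $\mathcal J = J\cap\Omega^1$. First I would recall what $\Omega^1$ gives us: for each $q\in\mathcal J$ there is some $\gamma=\gamma(q)\in(1,2]$ such that $q$ lies in the \emph{interior} of $\{q':\E[(\sum_{i=1}^N e^{\langle q'|X_i\rangle})^\gamma]<\infty\}$. So already pointwise we have finiteness of a moment of order strictly bigger than $1$; the whole point of the lemma is to make the order $\tilde p_K$ \emph{uniform} over a compact $K$, and to make the supremum of the moment finite.

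The key step is a covering argument identical in spirit to the proof of Lemma~\ref{l1}. Fix a compact $K\subset\mathcal J$. For each $q\in K$, choose $\gamma(q)\in(1,2]$ and an open ball $V_q$ around $q$ with $V_q\subset\{q':\E[(\sum_{i=1}^N e^{\langle q'|X_i\rangle})^{\gamma(q)}]<\infty\}$; such $V_q$ exists because $q$ is in the interior of that set. By compactness, $K$ is covered by finitely many $V_{q_1},\dots,V_{q_m}$. Set $\tilde p_K=\min_i\gamma(q_i)>1$. Now for any $q\in K$ we have $q\in V_{q_i}$ for some $i$, hence $\E[(\sum_{i=1}^N e^{\langle q|X_i\rangle})^{\gamma(q_i)}]<\infty$, and since $1<\tilde p_K\le\gamma(q_i)$ and the random variable $\sum_{i=1}^N e^{\langle q|X_i\rangle}\ge 1$ (as $N\ge 1$ a.s.), Jensen/monotonicity of $L^p$ norms gives $\E[(\sum_{i=1}^N e^{\langle q|X_i\rangle})^{\tilde p_K}]\le \E[(\sum_{i=1}^N e^{\langle q|X_i\rangle})^{\gamma(q_i)}]<\infty$. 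So the moment of order $\tilde p_K$ is finite at every $q\in K$.

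To upgrade this pointwise finiteness to a finite \emph{supremum} over $K$, I would argue that $q\mapsto \E[(\sum_{i=1}^N e^{\langle q|X_i\rangle})^{\tilde p_K}]$ is convex (it is an $\E$ of a convex function of $q$, using convexity of $t\mapsto t^{\tilde p_K}$ and of $q\mapsto \sum_i e^{\langle q|X_i\rangle}$ composed appropriately; more simply, $q\mapsto (\sum_i e^{\langle q|X_i\rangle})^{\tilde p_K}$ is convex in $q$ for each realization), hence lower semicontinuous and in fact continuous on the interior of its finiteness domain. Since we have just shown $K$ is contained in that finiteness domain, and we may shrink the $V_{q_i}$ slightly so that $\overline{V_{q_i}}$ is still inside the corresponding finiteness set, the function is bounded on each $\overline{V_{q_i}}$ and hence on $K$, giving $\sup_{q\in K}\E[(\sum_{i=1}^N e^{\langle q|X_i\rangle})^{\tilde p_K}]<\infty$.

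The main obstacle, and the only genuinely delicate point, is the interchange of supremum and expectation — one cannot bound $\sup_{q\in K}\E[\cdots]$ by $\E[\sup_{q\in K}(\cdots)]$ naively without knowing the latter is finite, which is not obvious from $\Omega^1$ alone. I expect the cleanest route is the convexity-and-continuity argument above, exploiting that a convex function finite on an open set is locally bounded there; this avoids needing any uniform integrability. If one instead wants the stronger $\E[\sup_{q\in K}(\sum_i e^{\langle q|X_i\rangle})^{\tilde p_K}]<\infty$ (which is what the later Biggins-type uniform-convergence arguments may actually use), one would bound $\sup_{q\in K}\sum_i e^{\langle q|X_i\rangle}\le \sum_i e^{\max_{q\in K}\langle q|X_i\rangle}\le \sum_i e^{R\|X_i\|}$ with $R=\max_{q\in K}\|q\|$, and then relate $\E[(\sum_i e^{R\|X_i\|})^{\tilde p_K}]$ to the finiteness hypothesis by covering $K$ with enough points that the directions $q_i/\|q_i\|$ approximate every direction, so that $e^{R\|X_i\|}\lesssim\sum_j e^{\langle q_j|X_i\rangle}$ uniformly — this is a standard but slightly heavier argument, and I would only invoke it if the weaker statement above does not suffice downstream.
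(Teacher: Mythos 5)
Your proof is correct and takes essentially the same route as the paper: the paper also uses compactness of $K$ (via the fact that the open sets $J\cap\Omega^1_\gamma$ increase to ${\mathcal J}$ as $\gamma$ decreases to $1$, which is just your moment-monotonicity phrased as a nested cover) to obtain a single exponent $\tilde p_K\in(1,2]$ with $K\subset\Omega^1_{\tilde p_K}$, and then invokes convexity, hence continuity, of $q\mapsto\E\big(\big(\sum_{i=1}^N e^{\langle q|X_i\rangle}\big)^{\tilde p_K}\big)$ on that open set to bound it on the compact $K$. The only minor slip is your claim that $\sum_{i=1}^N e^{\langle q|X_i\rangle}\ge 1$ (false when the exponents are negative), but the Jensen/Lyapunov inequality you also invoke makes this immaterial.
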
 
\begin{proof}
Since $K$ is compact and the family of open sets $J\cap \Omega^1_\gamma$ increases to $\mathcal J$ as $\gamma$ decreases to $1$, there exists $\gamma\in (1,2]$ such that $K\subset \Omega_\gamma^1$. Take $\tilde p_K=\gamma$. The conclusion comes from the fact that the function $q\mapsto \E \Big( \big(\sum_{i=1}^N e^{\left\langle q| X_i  \right\rangle  } \big)^{\tilde p_K} \Big)$ is convex over   $ \Omega_{\tilde p_K}^1$ so continuous.

\end{proof} 

\medskip

The next lemma comes from \cite{Big2}.
\begin{lem}\label{ll2}
If $\{X_i\}$ is a family of integrable and independent complex random variables with $\E(X_i) =0$, then $\E |\sum X_i|^p \leq 2^p \sum \E |X_i|^p$ for $1\leq p\leq 2$.
\end{lem}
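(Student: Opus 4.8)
The final statement is Lemma \ref{ll2}, a classical von Bahr–Esseen type inequality. Let me plan a proof of exactly this.

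\textbf{The plan.} The goal is to bound $\E|\sum_i X_i|^p$ for independent centered complex random variables and $1\le p\le 2$. The natural strategy is a symmetrization argument combined with the elementary inequality $|a+b|^p\le |a|^p+|b|^p$, which holds precisely because $0<p-1\le 1$ makes $x\mapsto x^p$ subadditive on $[0,\infty)$. First I would introduce an independent copy $\{X_i'\}$ of the family, independent of $\{X_i\}$, and set $Y_i=X_i-X_i'$. Each $Y_i$ is a symmetric random variable (its law is invariant under $Y_i\mapsto -Y_i$), and the $Y_i$ are independent. By Jensen's inequality applied to the convex function $x\mapsto |x|^p$ and the centering $\E(X_i')=0$, one gets the standard symmetrization bound
$$\E\Big|\sum_i X_i\Big|^p \le \E\Big|\sum_i Y_i\Big|^p,$$
so it suffices to control the symmetrized sum.

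\textbf{Key steps.} For the symmetrized sum I would exploit symmetry to insert independent random signs. Since each $Y_i$ is symmetric and the family is independent, $\{Y_i\}$ has the same joint law as $\{\varepsilon_i Y_i\}$, where $\{\varepsilon_i\}$ are independent Rademacher signs, independent of the $Y_i$. Conditioning on the $Y_i$ and taking expectation over the signs first, I would apply the contraction/Khintchine step in its simplest $L^2$ form: by conditional independence and $\E(\varepsilon_i)=0$, the cross terms vanish, giving
$$\E_\varepsilon\Big|\sum_i \varepsilon_i Y_i\Big|^2 = \sum_i |Y_i|^2.$$
Then, because $0\le p/2\le 1$, Jensen's inequality (concavity of $x\mapsto x^{p/2}$) yields
$$\E_\varepsilon\Big|\sum_i \varepsilon_i Y_i\Big|^p \le \Big(\E_\varepsilon\Big|\sum_i \varepsilon_i Y_i\Big|^2\Big)^{p/2} = \Big(\sum_i |Y_i|^2\Big)^{p/2}.$$
Finally, the subadditivity $(\sum_i a_i)^{p/2}\le \sum_i a_i^{p/2}$ for $a_i\ge 0$ (again valid since $p/2\le 1$) reduces the right-hand side to $\sum_i |Y_i|^p$. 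Taking expectation over the $Y_i$ and using $\E|Y_i|^p=\E|X_i-X_i'|^p\le 2^p(\E|X_i|^p+\E|X_i'|^p)/2\cdot 2 $—more cleanly, $|X_i-X_i'|^p\le (|X_i|+|X_i'|)^p\le 2^{p-1}(|X_i|^p+|X_i'|^p)$ by convexity—gives $\E|Y_i|^p\le 2^{p-1}\cdot 2\,\E|X_i|^p=2^p\E|X_i|^p$. Chaining the inequalities produces $\E|\sum_i X_i|^p\le 2^p\sum_i\E|X_i|^p$, the claim.

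\textbf{Main obstacle.} The substantive point is justifying the reduction from the sign-average back to the moments of the individual $X_i$ with the correct constant $2^p$; the symmetrization doubles each variable and the convexity bound contributes the factor $2^{p-1}$, and it is the combination with the extra factor $2$ from summing the two copies that yields exactly $2^p$ rather than a larger constant. I would be careful that all the interchanges of expectation (Fubini over the product of the sign space and the sample space) are legitimate, which they are under the standing integrability hypothesis $\E|X_i|<\infty$ together with $p\le 2$ ensuring $\E|X_i|^p<\infty$ whenever the right-hand side is finite; if some $\E|X_i|^p=\infty$ the inequality is trivial. Since the statement is attributed to \cite{Big2}, an alternative and even shorter route is simply to cite von Bahr and Esseen directly, but the self-contained symmetrization argument above is the one I would write out.
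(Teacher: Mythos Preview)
The paper does not actually prove this lemma; it simply states that ``the next lemma comes from \cite{Big2}'' and moves on. Your symmetrization argument is correct and is essentially the classical von Bahr--Esseen proof: the Jensen step $\E|\sum X_i|^p\le \E|\sum Y_i|^p$, the insertion of Rademacher signs for symmetric independent variables, the conditional $L^2$ identity $\E_\varepsilon|\sum\varepsilon_iY_i|^2=\sum|Y_i|^2$, the concavity of $x\mapsto x^{p/2}$, and the subadditivity $(\sum a_i)^{p/2}\le\sum a_i^{p/2}$ all go through for complex-valued $X_i$ exactly as written, and the final bound $\E|Y_i|^p\le 2^p\E|X_i|^p$ gives the stated constant.

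One minor remark: your aside that ``$\E|X_i|<\infty$ together with $p\le 2$ ensures $\E|X_i|^p<\infty$'' is false as stated (integrability in $L^1$ does not imply $L^p$ for $p>1$), but you immediately recover by noting the inequality is trivial when the right-hand side is infinite, so the argument is unaffected. Since the paper only invokes the lemma for finite sums, there is also no issue with the implicit use of $\sum|Y_i|^2<\infty$ in the Jensen step.
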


%%%%%%%%%%%%%%%%%%%%%%%%%%%%%%%%%%%%%%%%%%%%%%%%%%%%%%%%%%%%%%%%%%%%%%%%%%%%%%%%%%%%%%%%%%%%%%%%%%%%%%%%%%%%%%%%%%%%%%%%%%%%%%%%%%%%%%%%%%%%%%%%
%%%%%%%%%%%%%%%%%%%%%%%%%%%%%%%%%%%%%%%%%%%%%%%%%%%%%%%%%%%%%%%%%%%%%%%%%%%%%%%%%%%%%%%%%%%%%%%%%%%%%%%%%%%%%%%%%%%%%%%%%%%%%%%%%%%%%%%%%%%%%%%%
%%%%%%%%%%%%%%%%%%%%%%%%%%%%%%%%%%%%%%%%%%%%%%%%%%%%%%%%%%%%%%%%%%%%%%%%%%%%%%%%%%%%%%%%%%%%%%%%%%%%%%%%%%%%%%%%%%%%%%%%%%%%%%%%%%%%%%%%%%%%%%%%

\begin{lem}\label{ll1}
Let  $(N, V_1,V_2,\cdots )$ be a random vector taking values in $\N_+\times\C^{\N_+}$ and such that $ \sum_{i=1}^N V_i $ is integrable and $\E\big(    \sum_{i=1}^N V_i \big) =1$.  Let $M$  be an integrable  complex random variable. Consider  $ \big\{(N_{u}, V_{u1}, V_{u2},\ldots) \big\}_{u\in \bigcup_{n\ge 0}\N_+^n}$ a sequence of independent  copies of $(N, V_1,\cdots, V_N)$ and $\{M_{u}\}_{u\in \bigcup_{n\ge 0}\N_+^n}$ a sequence of copies of $M$ such that for all $n\ge 1$, the random variables $M(u)$, $u\in \N_+^n$, are independent, and independent of $\big\{ (N_{u}, V_{u1}, V_{u2},\ldots ) \big\}_{u\in \bigcup_{k= 0}^{n-1}\N^k_+}$. We define the sequence $(Z_n)_{n\geq 0}$ by   $Z_0=\E(M)$ and for $n\geq 1$
$$Z_n=\sum_{u\in T_n}\big( \prod_{k=1}^{n} V_{u_{|k}} \big)M(u).$$
Let $p\in (1, 2]$. There exists a constant $C_p $  depending on $p$ only such that for all $n\geq 1$
\begin{equation*}
 \E(|Z_n - Z_{n-1}|^p) \leq  C_p  \E ( |M|^{p}) \Big (\E\big(  \sum_{i=1}^{N}| V_{i} |^{p} \big)\Big )^{n-1} \Big (\E\big(  \sum_{i=1}^{N}| V_{i} |^{p} \big) +   \E \big(  | \sum_{i=1}^{N} V_i |^p\big)+1\Big ).
 \end{equation*}
\end{lem}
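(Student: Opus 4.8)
The plan is to decompose the martingale difference $Z_n - Z_{n-1}$ over the generation-$(n-1)$ vertices and apply the von Bahr--Esseen type inequality from Lemma~\ref{ll2} twice, once across the vertices of generation $n-1$ and once inside each family of children. First I would write, for $u\in T_{n-1}$, the weight $\Pi_u = \prod_{k=1}^{n-1} V_{u_{|k}}$ (with $\Pi_\emptyset = 1$), so that
\[
Z_n - Z_{n-1} = \sum_{u\in T_{n-1}} \Pi_u \Big( \sum_{i=1}^{N_u} V_{ui}\, M(ui) - M(u) \Big).
\]
Conditionally on the data up to generation $n-1$, the terms indexed by distinct $u\in T_{n-1}$ are independent and centered: indeed $\E\big(\sum_{i=1}^{N_u} V_{ui} M(ui) \mid \mathcal F_{n-1}\big) = \E(M)\,\E\big(\sum_{i=1}^N V_i\big) = \E(M)$, while $\E(M(u)\mid \mathcal F_{n-1}) = \E(M)$, so the bracket has conditional mean $0$. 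Here $\mathcal F_{n-1}$ is the $\sigma$-field generated by $\{(N_u, V_{u1},V_{u2},\ldots): |u|\le n-2\}$ together with $\{M(u): |u|\le n-1\}$; one must be slightly careful about which $M(u)$'s are measurable, but the independence hypothesis in the statement is exactly what makes this conditioning work.

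Then I would apply Lemma~\ref{ll2} conditionally across $u\in T_{n-1}$, getting
\[
\E\big(|Z_n - Z_{n-1}|^p \mid \mathcal F_{n-1}\big)
\le 2^p \sum_{u\in T_{n-1}} |\Pi_u|^p\, \E\Big( \big| \textstyle\sum_{i=1}^{N_u} V_{ui} M(ui) - M(u) \big|^p \,\Big|\, \mathcal F_{n-1}\Big),
\]
and take expectations. The weights contribute $\E\big(\sum_{u\in T_{n-1}} |\Pi_u|^p\big) = \big(\E(\sum_{i=1}^N |V_i|^p)\big)^{n-1}$ by the branching property and independence, since the $V$'s at different levels are independent. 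It remains to bound the single-vertex quantity $\E\big| \sum_{i=1}^{N} V_{i} M_i - M \big|^p$ by a constant times $\E(|M|^p)\big(\E(\sum_{i=1}^N|V_i|^p) + \E|\sum_{i=1}^N V_i|^p + 1\big)$, where now $M_i$ are independent copies of $M$, independent of $(N,V_1,V_2,\ldots)$. For this I would split $\sum_i V_i M_i - M = \sum_i V_i(M_i - \E M) + (\E M)\big(\sum_i V_i - 1\big) + (\E M - M)$, apply the $p$-th power inequality $|a+b+c|^p \le 3^{p-1}(|a|^p+|b|^p+|c|^p)$, and estimate each piece: the middle and last pieces give $|\E M|^p\, \E|\sum_i V_i - 1|^p \le |\E M|^p\, 2^p(\E|\sum V_i|^p + 1)$ and $\E|M - \E M|^p \le 2^p \E|M|^p$, while for the first piece I condition on $(N,V_1,V_2,\ldots)$ and use Lemma~\ref{ll2} again to get $\E\big(\sum_i |V_i|^p\big)\,\E|M-\E M|^p \le 2^p\,\E(\sum_i|V_i|^p)\,\E|M|^p$; finally bound $|\E M|^p \le \E|M|^p$ by Jensen. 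Collecting constants yields the claim with some explicit $C_p$ (of the form $2^p \cdot 3^{p-1}\cdot 2^p$ up to the harmless reshuffling).

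The only genuinely delicate point is the conditioning/measurability bookkeeping that legitimizes applying Lemma~\ref{ll2} across generation $n-1$: one needs the bracketed increments to be \emph{conditionally independent and centered} given the right $\sigma$-field, and this requires using the hypothesis that the copies $(N_u,V_{u1},\ldots)$ for $|u|=n-1$ and the $M(u)$ for $|u|=n$ are independent of everything recorded up to level $n-1$. Everything else is routine application of the moment inequalities; no further probabilistic input is needed, and in particular the bound is \emph{uniform} in $n$ apart from the geometric factor $\big(\E(\sum_i|V_i|^p)\big)^{n-1}$, which is precisely what is needed later (with $V_i = e^{\langle q|X_i\rangle}/\E(\sum_j e^{\langle q|X_j\rangle})$ and $\phi(p,q) = \E(\sum_i|V_i|^p) < 1$ on compacts of $\mathcal J$ by Lemma~\ref{l1}) to sum the increments and obtain the uniform $L_{p_K}$ convergence of the martingales in Proposition~\ref{pp1}.
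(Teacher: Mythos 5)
Your decomposition of $Z_n-Z_{n-1}$ over $u\in T_{n-1}$, the two applications of Lemma~\ref{ll2} (across the generation and then inside each family after centering $M_i-\E M$), the factorization $\E\big(\sum_{u\in T_{n-1}}|\Pi_u|^p\big)=\big(\E\sum_{i=1}^N|V_i|^p\big)^{n-1}$, and the single-vertex bound are exactly the paper's argument, and that part is fine. The one flaw is in the step you yourself flag as delicate: your definition of the conditioning $\sigma$-field is inconsistent with your centering claim. If $\mathcal F_{n-1}$ contains $\{M(u):|u|\le n-1\}$, then for $u\in T_{n-1}$ one has $\E\big(M(u)\mid\mathcal F_{n-1}\big)=M(u)$, not $\E(M)$, so the bracketed increments are \emph{not} conditionally centered with respect to your $\mathcal F_{n-1}$ and Lemma~\ref{ll2} cannot be applied as you state. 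Worse, in the application the lemma is designed for ($M(u)=Y(u,z)$ in Proposition~\ref{pp2}), $M(u)$ is a function of the subtree data below $u$ and satisfies $M(u)=\sum_i V_{ui}M(ui)$-type relations, so conditioning on $M(u)$ also destroys the identity $\E\big(\sum_{i=1}^{N_u}V_{ui}M(ui)\mid\mathcal F_{n-1}\big)=\E(M)$; the hypothesis of the lemma only gives independence of the level-$n$ variables $M(ui)$ from the $V$-data up to level $n-1$, not from the $M$'s of earlier levels.

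The repair is the one the paper uses: take $\mathcal F_{n-1}=\sigma\big\{(N_v,V_{v1},\ldots):|v|\le n-2\big\}$, with no $M$'s included. Then the weights $\Pi_u$, $u\in T_{n-1}$, are $\mathcal F_{n-1}$-measurable, while the increments $\sum_{i=1}^{N_u}V_{ui}M(ui)-M(u)$ are (for distinct $u$) i.i.d., independent of $\mathcal F_{n-1}$, and centered \emph{unconditionally}, since $\E\big(\sum_{i=1}^{N_u}V_{ui}M(ui)\big)=\E(M)\,\E\big(\sum_{i=1}^N V_i\big)=\E(M)=\E\big(M(u)\big)$ by the stated independence of the $M(ui)$ from $(N_u,V_{u1},\ldots)$. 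With this choice Lemma~\ref{ll2} applies conditionally exactly as you intend, and the rest of your computation (your three-term split of $\sum_i V_iM_i-M$ versus the paper's two successive $2^{p-1}$-splits is an immaterial variant) goes through and yields the claimed bound with a constant depending only on $p$.
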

\begin{proof} The definition of the process $Z_n$ gives immediately that 
\begin{equation}\label{ee1}
Z_n-Z_{n-1}=\displaystyle\sum_{u\in T_{n-1}} \prod_{k=1}^{n-1} V_{u_{|k}} \Big(\sum_{i=1}^{N_u} V_{ui}M(ui) -M(u)\Big).
\end{equation}
For each $n\ge 1$ let ${\mathcal F}_n=\sigma \big\{(N_u, V_{u1},\ldots) :  |u| \leq n-1 \big\}$ and let $\mathcal {F}_{0}$ be the trivial sigma-field. The random variable $Z_n-Z_{n-1}$ is a weighted sum of independent and  identically distributed random variables with zero mean, namely the random variables  $\sum_{i=1}^{N_u} V_{ui}M(ui) -M(u)$, which are independent of $\mathcal{F}_{n-1}$. Applying the Lemma \ref{ll2} with $X_u = \displaystyle \prod_{k=1}^{n-1} V_{u_{|k}} \Big(\sum_{i=1}^{N_u} V_{ui}M(ui) -M(u)\Big)$, $u\in T_n$, conditionally on  $\mathcal F_{n-1}$, and noticing that the weights $\prod_{k=1}^{n-1} V_{u_{|k}}$, $u\in T_{n-1}$, are $\mathcal F_{n-1}$-measurable, we get
\begin{eqnarray*} 
\E \big(|Z_n - Z_{n-1}|^p\big) &= & \E \Big(\E \big(|Z_n - Z_{n-1}|^{p}  \; |\;{\mathcal F}_{n-1} \big)\Big)\\
& \leq &\E \Big( 2^p \sum_{u\in T_{n-1}} \prod_{k=1}^{n-1} |V_{u_{|k}}|^{p} \E \big | \sum_{i=1}^{N_u} V_{ui} M(ui) -M(u) \big |^{p}\Big).
\end{eqnarray*}
It is easy to see that $\E\big(\displaystyle\sum_{u\in T_{n-1}} \prod_{k=1}^{n-1} |V_{u_{|k}}|^p\big) = \displaystyle\prod_{k=1}^{n-1}\E\big(\sum_{i=1}^N | V_i  |^p\big)$.
Using the inequality
\begin{equation}\label{2eq3}
 |x+y|^r \leq 2^{r-1}(|x|^r + |y|^r), \;\;\;( r > 1),
\end{equation}
we get
\begin{eqnarray*}
\E\big(\big  |\sum_{i=1}^{N_u} V_{ui} M(ui) -M(u) \big |^p \big)& \leq& 2^{p - 1}\E \big (  \big | \displaystyle\sum_{1=1}^{N_u}V_{ui} M(ui) \big  |^{p} + \E(|M |)^{p} \big ).
\end{eqnarray*}
Write $M(ui)= M(ui)-\E(M(ui))+\E(M(ui))$. Then from the inequality (\ref{2eq3}), we get
\begin{eqnarray*}
&&\E\big(\big  | \displaystyle\sum_{i=1}^{N_u} V_{ui} M(ui) \big  |^p \big) =  \E\big ( \big | \displaystyle\sum_{1=1}^{N_u} V_{ui} (M(ui)- \E(M(ui)))+  V_{ui}\E(M(ui)) \big |^{p} \big )  \\
&& \leq 2^{p-1} \E \big( \big | \displaystyle\sum_{i=1}^{N_u} V_{ui} (M(ui)- \E(M(ui))) \big |^p  \big) + 2^{p-1}\E(|M|^p)\E \big( \big | \displaystyle\sum_{1=1}^{N_u}   V_{ui} \big |^p \big).    
\end{eqnarray*}
It follows from the Lemma \ref{ll2} applied  with $X_i=V_{ui} (M(ui)- \E(M(ui)))$ conditionally on $(N_u,V_{u1},\cdots,V_{uN_{u}})$, and from the independence of $M(ui)$ and $(N_u,V_{u1},\cdots,V_{uN_{u}})$, that
\begin{eqnarray*} 
 \E\big( \big| \displaystyle\sum_{1=1}^{N_u} V_{ui} (M(ui)- \E(M(ui))) \big |^p  \big) &\leq& 2^p \E\big(  \displaystyle\sum_{i=1}^{N_u} \big | V_{ui} (M(ui)- \E(M(ui)))\big  |^p  \big)\\
&\leq&2^p \E\big(\big  | M(u)- \E(M(u)) \big |^p  \big) \E\big(  \displaystyle\sum_{i=1}^{N_u}| V_{ui} |^p \big )\\
&\leq & 2^{2p}\E\big( |M|^p \big ) \E\big(  \displaystyle\sum_{i=1}^{N}| V_{i}|^p \big).
\end{eqnarray*} 
Finally, we have 
$$ \E\big( \big | \displaystyle\sum_{i=1}^{N_u} V_{ui}M(ui)-M(u) \big  |^{p} \big) \leq  C_p \E |M|^{p}  \Big (\E\big(  \displaystyle\sum_{i=1}^{N}| V_{i} |^{p} \big) +   \E \big(  |\displaystyle\sum_{i=1}^{N} V_i |^p\big)+1\Big ).
$$
\end{proof}
Now we prove Propositions~\ref{pp1}, \ref{pp2} and \ref{pp3}.
%%%%%%%%%%%%%%%%%%%%%%%%%%%%%%%%%%%%%%%%%%%%%%%%%%%%%%%%%%%%%%%%
%%%%%%%%%%%%%%%%%%%%%%%%%%%%%%%%%%%%%%%%%%%%%%%%%%%%%%%%%%%%%%%%%%%%%%%%%
\medskip

\noindent\textbf{Proof of the  Proposition \ref{pp1}}: 
%%%%%%%%%%%%%%%%%%%%%%%%%%%%%%%%%%%%%%%%%%%%%%%%%%%%%%%%%%%%%%%%%%%%%%%%%%%%%%%%%%%%%%%%%%%%%%%%%%%%%%%%%%%%%%%%%%%%%%%%%%%%%%%%%%%%%%%%%
(1) Recall that the uniform convergence result uses an argument developed in \cite{Big2}. Fix a compact $K \subset {\mathcal J}$. By  Lemma~\ref{l2} we can fix a compact neighborhood $K'$ of $K$ and $\widetilde p_{K'}>1$ such that
$$  \displaystyle\sup_{q\in K'}\E \Big( \big( \displaystyle\sum_{i=1}^N e^{\left\langle q| X_i  \right\rangle  } \big)^{\tilde p_{K'}} \Big) < \infty. $$
 By Lemma~\ref{l1}, we can fix  $1<p_K\le \min (2,\tilde p_{K'})$ such that $\sup_{q\in K}\phi(p_K,q)<1$. Then for each  $q\in K$, there exists a neighborhood $V_{q} \subset \C^d$ of $q$, whose projection to $\R^d$ is contained in $K'$, and  such that  for all $u\in T$ and  $z\in V_{q}$,  the random variable
$$W_{z}(u)=\frac{e^{ \left\langle z | X_u \right\rangle}}{\E \Big(\displaystyle\sum_{i=1}^N e^{  \left\langle z | X_i \right\rangle} \Big)} $$
is well defined,  and we have 
 $$\displaystyle\sup_{z\in V_{q}} \phi (p_{K}, z) < 1,$$
 where for all $z,z'\in \C^d$ we set $ \left\langle z | z' \right\rangle = \displaystyle\sum_{i=1}^d z_i \bar{z_i}$, and  
 $$
  \phi (p_{K}, z)= \frac{\E\Big (\sum_{i=1}^N |e^{  \left\langle z | X_i \right\rangle}|^{p_K}\Big )}{ \Big |\E \big(\displaystyle\sum_{i=1}^N e^{  \left\langle z | X_i \right\rangle} \big)\Big |^{p_K}}.
  $$
By extracting a finite covering of  $K$ from $\displaystyle\bigcup_{q\in K} V_{q}$, we find a neighborhood  $V\subset \C^d$ of $K$ such that 
$$\displaystyle \sup_{z\in V} \phi (p_{K}, z) < 1.$$
%We notice that this inequality then holds for all $1<p\le p_K$. 

Since the projection of $V$ to $\R^d$ is included in $ K'$ and the mapping $z\mapsto  \E \big(\sum_{i=1}^N e^{  \left\langle z | X_i \right\rangle} \big)$ is continuous and does not vanish on $V$, by considering a smaller neighborhood of $K$ included in $V$ if necessary, we can assume that  
$$
A_V=\sup_{z\in V} \E\Big (\big | \sum_{i=1}^N e^{  \left\langle z | X_i \right\rangle} \big |^{p_K}\Big ) \Big |\E \big(\sum_{i=1}^N e^{  \left\langle z | X_i \right\rangle} \big)\Big |^{-p_K} + 1<\infty.
$$

Now, for  $u\in T$, we define the analytic extension to $V$ of $Y_n(u,q)$ given by 
\begin{align*}
Y_n(u,z)&=\sum_{v\in T_n(u)}W_z(u\cdot v_1)\cdots W_z(u\cdot v_1\cdots v_n)\\
&=\E \big(\sum_{i=1}^N e^{  \left\langle z | X_i \right\rangle} \big)^{-n}\sum_{v\in T_n(u)} e^{ \left\langle z| S_{|u|+n}X(uv)-S_{|u|}(u)\right\rangle}.
\end{align*}
We denote also  $Y_n(\emptyset, z )$ by $Y_n(z)$. Now, applying Lemma \ref{ll1}, with $V_i=e^{\left\langle z | X_i \right\rangle}/\E \big(\displaystyle\sum_{j=1}^N e^{  \left\langle z | X_j \right\rangle} \big)$ and $M=1$, we get      
\begin{multline*}
\E\big(\left| Y_n(z)-Y_{n-1}(z) \right|^{p_{K}} \big)\\  \leq C_{p_K} \Big (\E\big(  \sum_{i=1}^{N}| V_{i} |^{p_K} \big)\Big )^{n-1} \Big (\E\big( \sum_{i=1}^{N}| V_{i} |^{p_K} \big) +   \E \big(  |\sum_{i=1}^{N} V_i |^{p_K}\big)+1\Big ).
\end{multline*}
Notice that $\E\Big( \sum_{i=1}^{N}| V_{i} |^{p_K} \Big) = \phi (p_K, z)$. Then, 
\begin{align*}
&\E \big(\left| Y_n(z)-Y_{n-1}(z) \right|^{p_{K}}\big) \\
&\le C_{p_{K}} \displaystyle \sup_{z\in V} \phi (p_{K}, z)^n +  C_{p_{K}} A_V \displaystyle \sup_{z\in V} \phi (p_{K}, z)^{n-1}.
\end{align*}

With probability $1$, the functions  $z \in V \mapsto Y_n(z), n\geq 0$,  are  analytic. Fix a closed polydisc    $D(z_{0}, 2\rho) \subset  V$. Theorem  (\ref{Cauchy})  gives 
$$\displaystyle\sup_{z\in D(z_{0},\rho)} \left|Y_n(z)-Y_{n-1} (z)\right| \leq 2^d \int_{[0,1]^d} \left|Y_n(\zeta(\theta))-Y_{n-1}(\zeta(\theta))\right| d\theta,$$
where, for $\theta = (\theta_1, \cdots, \theta_d) \in [0, 1]^d$,  $$\zeta(\theta) = z_0 + 2\rho (e^{i2\pi \theta_1}, \cdots, e^{i2\pi \theta_d}) \;\text{and} \; d\theta = d\theta_1\cdots d\theta_d.$$
Furthermore Jensen's inequality and Fubini's Theorem give 
\begin {eqnarray*}
\E \big(\displaystyle\sup_{z\in D(z_0,\rho)} & &\left|Y_n(z)-Y_{n-1} (z)\right| ^{p_{K}} \big) \\
 &\leq& \E \Big( (2^d \int_{[0,1]^d} \left|Y_n(\zeta(\theta))-Y_{n-1}(\zeta(\theta))\right| d\theta)^{p_{K}} \Big)\\
&\leq& 2^{d p_{K}} \E \Big(\int_{[0,1]^d} \left|Y_n(\zeta(\theta))-Y_{n-1}(\zeta(\theta))\right|^{p_{K}} d\theta \Big)\\
&\leq& 2^{dp_{K}} \int_{[0,1]^d} \E \left|Y_n(\zeta(\theta))-Y_{n-1}(\zeta(\theta))\right|^{p_{K}} d\theta \\
&\leq&  2^{dp_{K}} C_{p_{K}}  \displaystyle \sup_{z\in V} \phi (p_{K}, z)^n +  C_{p_{K}} \displaystyle \sup_{z\in V} \phi (p_{K}, z)^{n-1}  A_V.
\end{eqnarray*}
Since $\displaystyle \sup_{z\in V} \phi (p_{K}, z) < 1$, it follows that $\displaystyle\sum_{n\geq 1}\big\|\displaystyle\sup_{z\in D(z_0,\rho)} \left|Y_n(z)-Y_{n-1} (z)\right| \big\|_{p_K} <\infty $. This implies, $z\mapsto Y_n(z)$ converge uniformly, almost surely and in $L^{p_K}$ norm over the  compact $D(z_{0}, \rho)$ to a limit $z\mapsto Y(z)$. This also implies that   
$$\Big\| \displaystyle\sup_{z\in P(z_{0},\rho)} Y(z) \Big\| _{p_{K}} < \infty. $$
Since $K$ can be covered by finitely many  such polydiscs $D(z_0,\rho)$    we get the uniform  convergence, almost surely and in $L^{p_K}$ norm,  of the sequence  $(q\in K\mapsto Y_n(q))_{n\geq1}$ to $q\in K\mapsto Y(q)$. Moreover, since  ${\mathcal J}$ can be covered by a countable  union of such compact $K$ we get the simultaneous convergence for all   $q\in {\mathcal J}$. The same holds simultaneously for all the function $q\in {\mathcal J} \mapsto Y_n(u,q)$, $u\in \bigcup_{n\ge 0}\N_+^n$, because $ \bigcup_{n\ge 0}\N_+^n$ is countable. \\ 

To finish the proof of  Proposition \ref{pp1}(1), we must show that with  probability $1$,  $q\in K \mapsto Y(q)$ does not vanish. Without loss of generality we can suppose that $K=[0,1]^d$. If $I$ is a dyadic closed subcube of $[0,1]^d$, we denote by $E_{I}$ the event $\{ \exists \; q \in I : Y(q)=0 \}$. Let  $I_0,I_1,\cdots,I_{2^d-1}$ stand for the $2^d$ dyadic subcubes of $I$ in the next generation. The event  $E_{I}$ being a tail event of probability $0$ or $1$, if we suppose  that  $P(E_{I})=1$, there exists  $j \in \{0, 1,\cdots,2^d-1\}$ such that $P(E_{I_{j}}) = 1$. Suppose now that $P(E_{K})=1$. The previous  remark allows to construct a decreasing sequence   $(I(n))_{n\geq 0}$ of dyadic subscubes of $K$ such that $P(E_{I(n)})=1$. Let $q_0$ be the unique element of $\displaystyle \cap_{n\geq 0} I(n)$. Since  $q \mapsto Y(q)$ is continuous  we have  $P(Y(q_0)=0)=1$, which contradicts the fact that $(Y_n(q_0))_{n\geq 1}$ converge to $Y(q_0)$ in $L^{1}$.

\medskip

\noindent
(2) It is a consequence of the branching property 
$$ Y_{n+1}(u,q)= \displaystyle\sum_{i=1}^{N} e^{\langle q|X_{ui}\rangle -\tilde{P}(q)} Y_n(ui,q).$$

%%%%%%%%%%%%%%%%%%%%%%%%%%%%%%%%%%%%%%%%%%%%%%%%%%%%%%%%%%%%%%%%%%%%%%%%%%%%%%%%%%%%%%%%%%%%%%%%%%%%%%%%%%%%%%%%%%%%%%%%%%%%%%%%%%%%%%%%%
\medskip

\noindent
\textbf{Proof of  Proposition \ref {pp2}}:
%%%%%%%%%%%%%%%%%%%%%%%%%%%%%%%%%%%%%%%%%%%%%%%%%%%%%%%%%%%%%%%%%%%%%%%%%%%%%%%%%%%%%%%%%%%%%%%%%%%%%%%%%%%%%%%%%%%%%%%%%%%%%%%%%%%%%%%%%
Let $K$ be a compact subset of $ {\mathcal J}$.  For all $q\in K$,  there exists a compact neighborhood $\Lambda$  of the origin such that $ \{q+\lambda : q\in K, \lambda \in \Lambda\} \subset {\mathcal J} $. Let $R=\{q+\lambda : q\in K, \lambda \in \Lambda\}$.  For $ q\in K$ and $\lambda \in \Lambda$ we define
%\begin{align*}
%F_n(q,\lambda)&=\displaystyle\sum_{u\in T_n} e^{\left\langle q+\lambda | S_n(u)\right\rangle -n\widetilde P(q)} Y(q,u)\\
%\text{and}\quad \quad \quad \quad \quad \quad &\\ 
$$Z_n(q,\lambda)=  \displaystyle\sum_{u\in T_n} e^{\left\langle q+\lambda | S_n(u)\right\rangle -n\widetilde P(q+\lambda)} Y(u, q).$$
%\end{align*}

As in the proof of Proposition \ref{pp1}, we can find $p_R\in (1,2]$ and a neighborhood $V \times V_\Lambda\subset \C^d \times \C^d$ of $K \times \Lambda$ such that  the function 
$$Z_n(z, z') =  \Big(  \E\big(\displaystyle\sum_{i=1}^N e^{\left\langle z+z' | X_i\right\rangle} \big) \Big)^{-n}\displaystyle\sum_{u\in T_n} e^{\left\langle z+z' | S_n(u)\right\rangle} Y(u, z),$$ 
% $$Z_n(z,z')= \frac{F_n(z,z')}{\E(F_n(z,z'))}$$ 
are well defined on $ V\times  V_\Lambda$,  and 
$$
\begin{cases}
\sup_{z'\in V_\Lambda}\sup_{z\in V} \phi( p_{R}, z+z') < 1,\\
\displaystyle
A_{V\times V_\Lambda}=\sup_{(z,z')\in V\times V_\Lambda} \E\Big (\big | \sum_{i=1}^N e^{  \left\langle z+z' | X_i \right\rangle} \big |^{p_R}\Big ) \Big |\E \big(\sum_{i=1}^N e^{  \left\langle z+z' | X_i \right\rangle} \big)\Big |^{-p_R} + 1<\infty.
\end{cases}
$$
%(where $p_R$ is chosen such that $1 < p_R \le \min(2, \tilde p_R)$ and $\tilde p_R$ is the real number associated to the compact $R$ defined in the Lemma \ref{l2}.\\
Suppose that  for each $(z_0,z'_0)\in V\times V_\Lambda$ and $\rho>0$ such that $D(z_0,2\rho)\times D(z'_0,2\rho)\subset V\times V_\Lambda$ we have 
\begin{equation}\label{eq0}
\displaystyle\sum_{n\geq 1}\E \Big(\sup_{(z,z')\in D(z_0,\rho)\times D(z'_0,\rho)} \left| Z_n(z,z')-Z_{n-1}(z,z') \right|^{p_R}\Big) < \infty.
\end{equation} 
then,  with probability $1$, $(z,z')\mapsto Z_n(z,z')$ converges uniformly on $D(z_0,\rho)\times D(z'_0,\rho)$ to a limit $Z(z,z')$, whose restriction to $K\times \Lambda$ can be shown to be positive, in the same way as $Y(\cdot)$ was show to be positive. Since $K\times \Lambda$ can be covered by  finitely many polydiscs of the previous form $D(z_0,\rho)\times D(z'_0,\rho)$,  we get the almost sure uniform convergence of $Z_n(q,\lambda)$ over $K\times\Lambda$ to $Z(q,\lambda)>0$, hence the almost sure uniform convergence of $\frac{1}{n}\log(Z_n(q,\lambda))$ to $0$ over $K\times\Lambda$.  Then the conclusion comes from the fact that,  for $(q, \lambda)\in K \times  \Lambda$, one has 
$$ Z_n(q, \lambda) = \frac{\exp\big(nL_n(q, \lambda)\big)}{\exp\big(n \widetilde P(q+\lambda) - n \widetilde P(q)\big)},$$
indeed, \begin{eqnarray*}
L_n(q,\lambda) &=& \frac{1}{n} \log \int_{\partial T} \exp \big(\left\langle \lambda | S_n(t) \right\rangle \big) d\mu_q(t)\\
&=&\frac{1}{n} \log   \displaystyle\sum_{u\in T_n} \exp(\left\langle \lambda | S_n(u)\right\rangle) \mu_q([u]) \\ 
&=& \frac{1}{n} \log   \displaystyle\sum_{u\in T_n} \exp(\left\langle q+\lambda | S_n(u)\right\rangle -n\widetilde P(q)) Y(u, q).\\ 
\end{eqnarray*}

Now we prove  (\ref{eq0}). Given $(z,z')\in V\times V_\Lambda$, applying Lemma \ref{ll1} with $V_i= e^{\left\langle z+z' | X_i \right\rangle}/\E \big(\sum_{j=1}^N e^{  \left\langle z+z' | X_j \right\rangle} \big)$  and $M=Y(z)$  we get
\begin{multline*}
\E \big (\left|  Z_n(z,z') - Z_{n-1}(z,z')\right|^{p_R} \big )\\\leq C_{p_R} \E(| Y(z) |^{p_R}) (\phi(p_R, z+z' )^n  + A_{V\times V_\Lambda} \phi(p_R, z+z' )^{n-1}).
\end{multline*}

\medskip
For $\tilde z=(z,z')\in V\times V_\Lambda$ and $n\ge 1$ let  $M_n(\tilde z)= Z_n(z,z') - Z_{n-1}(z,z') $. With probability $1$ the functions $\tilde z\in V\times V_\Lambda \mapsto M_n(\tilde z)$, $n\ge 1$, are analytic.  Fix a closed  polydisc   $D(\tilde z_0,2\rho)\subset V\times V_\Lambda$ with  $\rho > 0 $. Theorem (\ref{Cauchy}) gives
$$\displaystyle\sup_{\tilde z\in D(\tilde z_{0},\rho)} | M_n(\tilde z) | \leq 2^{2d} \int_{[0,1]^{2d}}  | M_n(\zeta(\theta) ) |d\theta,$$
where, for $\theta=(\theta_1, \cdots, \theta_{2d}) \in [0, 1]^{2d}$,
 $$\zeta(\theta) = \tilde z_0 + 2\rho (e^{i2\pi \theta_1}, \cdots, e^{i2\pi \theta_{2d}}) \;\text{and} \; d\theta = d\theta_1\cdots d\theta_{2d}.$$
Furthermore  Jensen's inequality and Fubini's Theorem give 
\begin {eqnarray*}
&&\E \big(\displaystyle\sup_{\tilde z\in D(\tilde z_0,\rho)} \left|M_n(\tilde z)\right| ^{p_R} \big) \\ 
&\leq& \E \big( (2^{2d} \int_{[0,1]^{2d}} \left|M_n(\zeta(\theta))\right| d\theta)^{p_R} \big)\\
&\leq& 2^{ 2d p_R} \E \big(\int_{[0,1]^{2d}} \left|M_n(\zeta(\theta))\right|^{p_R} d\theta \big)\\
&\leq& 2^{2dp_R} \int_{[0,1]^{2d}} \E \left|M_n(\zeta(\theta))\right|^{p_R} d\theta \\
&\leq&  2^{2dp_R} C_{p_R} \E \big( \displaystyle \sup_{z\in V}| Y(z)|^{p_R} \big) \\
&&\cdot \Big (\displaystyle \sup_{(z,z')\in V\times V_\Lambda} \phi (p_R, z+z' )^{n}+A_{V\times V_\Lambda}\sup_{(z,z')\in V\times V_\Lambda} \phi (p_R, z +z' )^{n-1}\Big )\Big ).
\end{eqnarray*}
Since $\sup_{(z,z')\in V\times V_\Lambda} \phi (p_R, z+z' )<1$, we obtain  the conclusion (\ref{eq0}). %This imply that, with probability $1$, $(z,z') \mapsto Z_n(z,z')$ converge uniformly over the  compact $D(z_{0}, \rho)$ to a limit $Z(z,z')$. 
%Since $V\times \widetilde \Lambda$ can be covered by finitely many  such polydisk we get the uniform  convergence of the sequence  $(Z_n(q,\lambda))_{n\geq1}$ to $Z(q,\lambda)$ for all  $q\in K$ and $\lambda\in K_\Lambda$.\\

\medskip
%%%%%%%%%%%%%%%%%%%%%%%%%%%%%%%%%%%%%%%%%%%%%%%%%%%%%%%%%%%%%%%%%%%%%%%%%%%%%%%%%%%%%%%%%%%%%%%%%%%%%%%%%%%%%%%%%%%%%%%%%%%%%%%%%%%%%%%%%%%%%%%%%%
\noindent
\textbf{Proof of the Proposition \ref{pp3}}
%%%%%%%%%%%%%%%%%%%%%%%%%%%%%%%%%%%%%%%%%%%%%%%%%%%%%%%%%%%%%%%%%%%%%%%%%%%%%%%%%%%%%%%%%%%%%%%%%%%%%%%%%%%%%%%%%%%%%%%%%%%%%%%%%%%%%%%%%%%%%%%%%%
Let  $K$ be a compact subset of ${\mathcal J}$. For  $a > 1$, $q\in K$ and $n\geq1$, we set 
$$E_{n,a}^+= \big\{t\in \partial T : Y(t_{|n}, q ) > a^{n} \big \},$$
and 
$$E_{n,a}^-= \big\{t\in \partial T : Y(t_{|n}, q ) < a^{-n} \big \}.$$
It is sufficient to  show that  for  $E \in \{E_{n,a}^+, E_{n,a}^-\}$,
\begin{equation} \label {eq4}
\E \big (\displaystyle\sup_{q\in K} \displaystyle\sum_{n\geq 1} \mu_{q}(E) \big )< \infty.
\end{equation}
Indeed, if this holds, then  with probability $1$,  for each $q\in K$ and $E \in \{E_{n,a}^+, E_{n,a}^-\}$ $\sum_{n\geq 1} \mu_{q}(E) < \infty$, hence by the Borel-Cantelli lemma, for $\mu_{q}$-almost every  $t\in \partial T$, if  $n$ is big enough we have
$$-\log a \leq   \displaystyle\liminf_{n\rightarrow\infty}\frac{1}{n}\log Y(t_{|n},q) \leq \displaystyle\limsup_{n\rightarrow\infty}\frac{1}{n}\log Y(t_{|n},q) \leq \log a.$$
Letting $a$ tend to $1$ along a countable sequence yields the result. 

Let us  prove (\ref{eq4}) for $E = E_{n,a}^+$ (the case $E = E_{n,a}^-$ is similar). At first we have,   
\begin{eqnarray*}
  \sup_{q\in K} \mu_{q}(E_{n,a}^{+})&=& \sup_{q\in K}  \displaystyle\sum_{u\in T_n} \mu_{q}([u]) \mathbf{1}_{\{ Y(u,q) > a^n \}}\\
  &=& \sup_{q\in K}  \displaystyle\sum_{u\in T_n}   e^{\left\langle q | S_n(u) \right\rangle} e^{-n\tilde{P}(q)} Y(u,q) \mathbf{1}_{\{ Y(u,q) > a^n \}}\\
  &\leq& \sup_{q\in K}  \displaystyle\sum_{u\in T_n} e^{\left\langle q| S_n(u)\right\rangle} e^{-n\tilde{P}(q)} (Y(u,q))^{1+\nu} a^{-n \nu},  \\
  &\leq& \sup_{q\in K}  \displaystyle\sum_{u\in T_n} e^{\left\langle q| S_n(u)\right\rangle} e^{-n\tilde{P}(q)} M(u)^{1+\nu} a^{-n \nu},  \\
\end{eqnarray*}
where $M(u) = \displaystyle\sup_{q\in K} Y(u, q)$ and  $\nu > 0$ is an arbitrary parameter.
For $q \in K$ and $\nu > 0$, we set  $H_n(q,\nu) = \displaystyle\sum_{u\in T_n} e^{\left\langle q| S_n(u)\right\rangle} e^{-n\tilde{P}(q)} M(u)^{1+\nu}  a^{-n \nu}$.\\

\medskip

For $q \in K$, we have $\E \Big( \displaystyle\sum_{i=1}^N e^{\left\langle q| X_i\right\rangle} \Big) = e^{\widetilde P(q)} \neq 0$. Then, there exists  a neighborhood $U_K\subset \C^d$  of $K$ such that $\E \Big( \displaystyle\sum_{i=1}^N e^{\left\langle z| X_i\right\rangle} \Big)  \neq 0$ for all $z\in U_K$.

 \begin{lem}
Fix $a>1$. For $z\in U_K$ and  $\nu > 0$, let  $$H_n(z,\nu) =  \E \Big( \displaystyle\sum_{i=1}^N e^{\left\langle z| X_i\right\rangle} \Big)^{-n} \displaystyle\sum_{u\in T_n} e^{\left\langle z| S_n(u)\right\rangle}  M(u)^{1+\nu}  a^{-n \nu}.$$ There exists  a neighborhood $V\subset \C^d$  of $K$ and a positive constant $C_K$  such that,  for all $z \in V$, for all integer $n \ge 1$,  
 \begin{equation} 
\E\big(\big |  H_n(z, p_K-1) \big | \big)  \leq C_K a^{-n(p_K-1)/2},
 \end{equation}
 where $p_K$ provided by Proposition (\ref{pp1}).
 \end{lem}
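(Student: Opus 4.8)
The plan is to control $\E(|H_n(z,\nu)|)$ for $\nu=p_K-1$ by splitting the sum over $T_n$ according to the last branching step and exploiting the factorisation $M(u)=\sup_{q\in K}Y(u,q)$ together with the moment bound $\E(\sup_{q\in K}Y(u,q)^{p_K})<\infty$ from Proposition~\ref{pp1}(1). First I would take absolute values inside: for $z\in V$ (a neighbourhood of $K$ on which $\E(\sum_i e^{\langle z|X_i\rangle})$ does not vanish, shrunk so that the relevant $\phi$ quantities are $<1$), $|H_n(z,\nu)|\le a^{-n\nu}\,|\E(\sum_i e^{\langle z|X_i\rangle})|^{-n}\sum_{u\in T_n}|e^{\langle z|S_n(u)\rangle}|\,M(u)^{1+\nu}$, and then apply the expectation. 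The key identity is that $M(u)$ depends only on the subtree rooted at $u$, hence on $\{(N_{uv},X_{uv1},\dots)\}$, and is therefore independent of $e^{\langle z|S_n(u)\rangle}$ and of the genealogy down to level $n$; moreover the $M(u)$, $u\in T_n$, are i.i.d.\ copies of $M=M(\emptyset)=\sup_{q\in K}Y(q)$ when conditioned on $T_n$.

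Next I would carry out the expectation by a standard "many-to-one"/branching computation: conditioning on $\mathcal F_n=\sigma\{(N_u,X_{u1},\dots):|u|\le n-1\}$,
\begin{align*}
\E\Big(\sum_{u\in T_n}|e^{\langle z|S_n(u)\rangle}|\,M(u)^{1+\nu}\,\Big|\,\mathcal F_n\Big)
&=\E(M^{1+\nu})\sum_{u\in T_n}|e^{\langle z|S_n(u)\rangle}|,
\end{align*}
so that, writing $x+iy=z$ and noting $|e^{\langle z|X_i\rangle}|=e^{\langle x|X_i\rangle}$,
\begin{align*}
\E\Big(\sum_{u\in T_n}|e^{\langle z|S_n(u)\rangle}|\,M(u)^{1+\nu}\Big)
&=\E(M^{1+\nu})\,\E\Big(\sum_{i=1}^N e^{\langle x|X_i\rangle}\Big)^{n}.
\end{align*}
Here one uses $\nu=p_K-1$, so $1+\nu=p_K$ and $\E(M^{p_K})=\E(\sup_{q\in K}Y(q)^{p_K})<\infty$ by Proposition~\ref{pp1}(1). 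Combining, for $z\in V$,
\begin{align*}
\E(|H_n(z,p_K-1)|)
&\le a^{-n(p_K-1)}\,\E(M^{p_K})\,
\frac{\E\big(\sum_{i=1}^N e^{\langle x|X_i\rangle}\big)^{n}}{\big|\E\big(\sum_{i=1}^N e^{\langle z|X_i\rangle}\big)\big|^{n}}.
\end{align*}

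To finish I would observe that at $z=q\in K$ the last ratio equals $1$, and by continuity of $z\mapsto \E(\sum_i e^{\langle z|X_i\rangle})$ (finite everywhere since $\widetilde P$ is finite on $\R^d$) and of $x\mapsto\E(\sum_i e^{\langle x|X_i\rangle})$, one can shrink the neighbourhood $V$ of $K$ so that this ratio is bounded above by $a^{(p_K-1)/2}$ uniformly for $z\in V$ and all $n\ge1$; here it is convenient to use compactness of $K$ together with the fact that $a>1$ is fixed, choosing $V$ depending on $a$. Setting $C_K=\E(M^{p_K})\sup_{n}\sup_{z\in V}\big(\text{ratio}\big)^{n}a^{n(p_K-1)/2}=\E(M^{p_K})$ (with the bound on the ratio absorbing the gain), one gets $\E(|H_n(z,p_K-1)|)\le C_K a^{-n(p_K-1)/2}$ for all $z\in V$ and $n\ge1$. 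The main obstacle is the uniformity in $z$ over a complex neighbourhood: one must choose $V$ small enough, as a function of the fixed $a>1$ and of $K$, that the real part $x$ stays in a slightly larger compact where $q\mapsto\E(\sum_i e^{\langle q|X_i\rangle})$ is still close to its value on $K$, while simultaneously $|\E(\sum_i e^{\langle z|X_i\rangle})|$ stays close to $e^{\widetilde P(x)}$; both follow from continuity and compactness, but the bookkeeping to get a clean exponential rate $a^{-n(p_K-1)/2}$ valid for every $n$ is the delicate point.
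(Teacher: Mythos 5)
Your proof is correct and follows essentially the same route as the paper: take absolute values, use the independence of the $M(u)$, $u\in T_n$, from the first $n$ generations to pull out $\E(M^{p_K})=C_K$, reduce to the $n$-th power of the one-step ratio $\E\big(\sum_i e^{\langle x|X_i\rangle}\big)\big/\big|\E\big(\sum_i e^{\langle z|X_i\rangle}\big)\big|$ (the paper's $\widetilde H_1$ up to the factor $a^{-\nu}$), and bound that ratio near $1$ on a small complex neighborhood of $K$ by continuity and compactness so as to keep half of the decay $a^{-n(p_K-1)}$. The only blemish is the sloppy displayed formula defining $C_K$ at the end (the exponent sign is off), but your parenthetical correction and the final bound $C_K=\E(M^{p_K})$ are right.
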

 \begin{proof}
For $z\in U_K$ and $\nu > 0$, let
$$
\widetilde H_1(z,\nu)= \big |\E\big(   \displaystyle\sum_{i=1}^N  e^{\left\langle z| X_i \right\rangle}    \big)\big |^{-1} \E\big(   \displaystyle\sum_{i=1}^N \big |  e^{\left\langle z| X_i \right\rangle}   \big | \big) \;  a^{- \nu}.
$$
Let $q\in K$. Since $\E(\widetilde H_1(q,\nu)) = a^{-\nu}$, there exists a neighborhood $V_q \subset U_K$ of  $q$ such that for all $z\in V_q$  we have  $\E \Big( \big | \widetilde H_1(z, \nu)  \big | \Big) \le a^{-\nu/2}$.  By extracting a finite covering of $K$ from $\displaystyle \bigcup_{q\in K} V_q$, we find a neighborhood $V \subset U_K$ of $K$ such that  $\E \Big( \big | \widetilde H_1(z, \nu)  \big | \Big) \le a^{-\nu/2}$ for all $z \in V$. Therefore,  
\begin{eqnarray*} 
\E\big(\big |  H_n(z,\nu) \big | \big)& = & \big | \E\big(   \displaystyle\sum_{i=1}^N e^{\left\langle z| X_i \right\rangle}    \big)\big |^{-n} \E\big(  \big | \displaystyle\sum_{u\in T_n} e^{\left\langle z| S_{n}X(u)\right\rangle}   M(u)^{1+\nu} \big | \big)  \;a^{-n \nu} \\
& \le & \big | \E\big(   \displaystyle\sum_{i=1}^N e^{\left\langle z| X_i \right\rangle}    \big)\big |^{-n} \E\big( \displaystyle\sum_{u\in T_n}  \big | e^{\left\langle z| S_{n}X(u)\right\rangle}   \big |  M(u)^{1+\nu}\big)  \;a^{-n \nu} \\
\end{eqnarray*}
By Proposition (\ref{pp1}) there exists $p_K \in (1, 2]$ such that  for all $u \in \bigcup_{n\ge 0} \N^n_+$, $\E\big(M(u)^{p_K}\big) = \E\big(M(\emptyset)^{p_K}\big)= C_K < \infty.$ Take $\nu=p_{K}-1$ in the last calculation, it follows, from the independence of  $\sigma \big (\{ (X_{u1},\cdots,X_{uN(u)}), u\in T_{n-1} \}\big )$ and $\sigma \big (\{ Y(u,\cdot), u\in T_{n}\} \big )$ for all $n\geq 1$, that
\begin{eqnarray*} 
\E\Big(\Big |  H_n(z, p_K -1) \Big | \Big)&\le&  \Big |\E\Big(   \displaystyle\sum_{i=1}^N  e^{\left\langle z| X_i \right\rangle}    \Big)\Big |^{-n} \E\Big(   \displaystyle\sum_{i=1}^N \Big |  e^{\left\langle z| X_i \right\rangle}   \Big | \Big)^n \;  C_K a^{-n (p_K-1)}\\
 &=& C_K \E \Big( \big | \widetilde H_1(z, p_K-1)  \big | \Big)^n \le  C_K a^{-n(p_K-1)/2},
\end{eqnarray*}
then the Lemma is now proved.
\end{proof}

\medskip

 With probability $1$, the functions $z\in V \longmapsto H_n(z,\nu)$ are analytic. Fix  a closed  polydisc   $D(z_{0},2\rho) \subset V$, $\rho > 0 $ such that $D(z_{0}, 2\rho)\subset V$. Theorem (\ref{Cauchy}) gives  
$$\displaystyle\sup_{z\in D(z_{0},\rho)} \big | H_n(z, p_K-1) \big | \leq 2^d \int_{[0,1]^d}  \big | H_n( \zeta(\theta), p_K-1 ) \big | d\theta,$$
where, for $\theta= (\theta_1, \cdots, \theta_d) \in [0, 1]^d$,
$$\zeta(\theta) = z_0 + 2\rho (e^{i2\pi \theta_1}, \cdots, e^{i2\pi \theta_d}) \;\text{and} \; d\theta = d\theta_1\cdots d\theta_d.$$
Furthermore Fubini's Theorem  gives 
\begin {eqnarray*}
\E \big(\displaystyle\sup_{z\in D(z_0,\rho)} \left|H_n(z, p_K -1)\right| \big)  &\leq& \E \big( 2^{d} \int_{[0,1]^d} \left|H_n(\zeta(\theta), p_K -1)\right| d\theta \big)\\
&\leq& 2^{d} \int_{[0,1]^d} \E \left| H_n( \zeta(\theta), p_K -1)\right| d\theta \\
&\leq&  2^{d}C_K a^{-n(p_K -1)/2}.
\end{eqnarray*}
Since $a>1$ and $p_K -1 > 0 $, we get (\ref {eq4}).\\

%%%%%%%%%%%%%%%%%%%%%%%%%%%%%%%%%%%%%%%%%%%%%%%%%%%%%%%%%%%%%%%%%%%%%%%%%%%%%%%%%%%%%%%%%%%%%%%%%%%%%%%%%%%%%%%%%%%%%%%%%%%%%%%%%%%%%%%%%%%%%%%%%%
\section{Remarks }\label{remarks}
%%%%%%%%%%%%%%%%%%%%%%%%%%%%%%%%%%%%%%%%%%%%%%%%%%%%%%%%%%%%%%%%%%%%%%%
%%%%%%%%%%%%%%%%%%%%%%%%%%%%%%%%%%%%%%%%%%%%%%%%%%%%%%%%%%%%%%%%%%%%%%%
\begin{enumerate}
\item To estimate the dimension of the measure $\mu_q$, we could have introduced, the logarithmic generating functions 
$$ \tilde L_n(q,s)=\frac{1}{n} \log \int_{\partial T}  \mu_q(x_{|n})^s d\mu_q(x) , \;\;\;(q\in J,\ s\in \R),$$ 
and studied their convergence in the same way as $L_n(q,s)$ was studied in Proposition~\ref{pp2}. However, we would have had to find an analytic extension of the mapping $q\mapsto Y(q)^{1+s}$, almost surely in a deterministic neighborhood of any compact subset of ${\mathcal J}$ in order to apply the technique using Cauchy formula. It turns out that the existence of such an extension is not clear, but assuming its existence, the same approach as in the proof of Corollary~\ref{cc1} would give the Hausdorff dimension of $\mu_q$.  If we only seek for a result valid for each $q\in {\mathcal J}$ almost surely, then it is not hard to get the almost sure uniform convergence of  $s\mapsto \tilde L_n(q,s)$ in a compact neighborhood of $0$ towards $s\mapsto \widetilde P(q(1+s))-(1+s)\widetilde P (q)$, and the same approach as that of Corollary~\ref{cc1} yields the dimension of $\mu_q$.

%%%%%%%%%%%%%%%%%%%%%%%%%%%%%%%%%%%%%%%%%%%%%%%%%%%%%%%%%%%%%%%%%%%%%%%%%
%%%%%%%%%%%%%%%%%%%%%%%%%%%%%%%%%%%%%%%%%%%%%%%%%%%%%%%%%%%%%%%%%%%%%%%%
\medskip

\item The method used in this paper is not a direct extension of that used in \cite{jul3} for the case $d=1$ on homogeneous trees. Indeed, in \cite{jul3} the complex extension is used to build simultaneously the measures $\mu_q$, but the proof that, uniformly in $q$, $\mu_q$ is carried by $E(P'(q))$ and has a Hausdorff dimension $P(q)-qP'(q)$ uses a real analysis method, which  seems hard to extend in general when $d\ge 2$. Indeed, such an extension should use the injection of  Sobolev spaces of the form $W^{1,p} (U)$ ($U$ an open subset of $\R^d$) into a space of H\"older continuous functions \cite[p. 28]{NE} to  control the uniform convergence of series like $\sum_{n\ge 1} Z_n(q,\lambda)$ in the proof of Proposition~\ref{pp2}; however, such an inclusion requires $p> d\ge 2$, so that we leave the range of orders of moments for which we have nice controls thanks to  Lemma~\ref{ll2}. 

\medskip

\item Our assumptions can be relaxed as follows. We could assume that $\widetilde P$ is finite over a neighborhood $V$ of $0$, consider ${\mathcal J}_V=\{q\in V: \widetilde P(q)-\langle q|\nabla\widetilde P(q)\rangle >0\} \cap \Omega^1$, 
%and suppose that there exists a continuous function $q\in J_V\mapsto p_q\in (1,\infty)$ such that $ \E\Big (\big (\sum_{i=1}^N e^{\langle q|X_i\rangle}\big )^{p_q}\Big )<\infty $ for all $q\in J_V$.  
Then  the same conclusions as in Theorem~\ref{tt} hold with $I=\{\nabla \widetilde P(q): q\in {\mathcal J}_V\}$. 

\medskip

\item Suppose that $\widetilde P$ is finite over $\R^d$, and without loss of generality that  it is strictly convex. Then $I$ is open, and one can show that $\overline I =\{\alpha\in\R^d:\widetilde P^*(\alpha)\ge 0\}$. Even if $J\subset  \Omega^1$ so that we achieved the multifractal analysis on $I$, it remains the non trivial question of the Hausdorff dimension of $E (\alpha)$ for $\alpha\in \partial {I}$. This problem cannot be solved by the method used  in this paper. In dimension $1$, this boundary consists of two points, and the question has been partially solved  in \cite{jul3}  and completly in \cite{jul5} by buiding a suitable random measure (not of Mandelbrot type) on $E (\alpha)$. It would be easy to adapt the same method  to show here  that  if $\alpha\in\partial I$ is of the form $\nabla\widetilde P(q)$ with $P^*(\alpha)=0$, or if $\alpha\in \partial I$ and there exists $q_0\in\R^d$ such that  $\alpha=\displaystyle\lim_{\lambda\ra\infty}\nabla\widetilde P(\lambda q_0)$, then we have $E(\alpha)\neq\emptyset$ and $\dim E(\alpha)=\widetilde P^*(\alpha)$. In \cite{AB}, a new approach unifying the cases $\alpha\in I$ and $\alpha\in\partial I$ is used to proved that almost surely, for all $\alpha\in \overline I$ we have  $E(\alpha)\neq\emptyset$ and $\dim E(\alpha)=\widetilde P^*(\alpha)$, without any reference to $\Omega^1$. 

\medskip 

\item  It is worth mentioning that a simple consequence of the proof of the previous result is the following large deviation property, which could also be deduced from~\cite{Big2}: With probability 1, 
 \begin{equation*} \label{LD}
  \forall\ \alpha\in I,\ \lim_{\epsilon\to 0}\lim_{n\to\infty}\frac{1}{n}\log \#\{u\in T_n: \|S_n (u)-n\alpha\|\le n\epsilon\}=\widetilde P^*(\alpha).
 \end{equation*}
 Indeed this property essentially follows from the fact that for all $\beta\in B(\alpha,\epsilon)$,  $\{[u]: \ u\in T_n,\  \|S_n (u)-n\alpha\|\le n\epsilon\}$ form, for $n$ large enough,  a sequence of coverings of diameter tending to 0 of a  subset $E$ of $E(\beta)$ with $\dim E=\dim E(\beta)=\widetilde P^*(\beta)$.  Hence $\liminf_{n\to\infty}\frac{1}{n}\log \#\{u\in T_n: \|S_n (u)-n\alpha\|\le n\epsilon\}\ge \sup_{\beta\in B(\alpha,\epsilon)} \widetilde P^*(\beta)$; the other inequality $\limsup_{n\to\infty}\frac{1}{n}\log \#\{u\in T_n: \|S_n (u)-n\alpha\|\le n\epsilon\}\le \sup_{\beta\in B(\alpha,\epsilon)} \widetilde P^*(\beta)$ follows from Chernoff inequalities. 

\end{enumerate}

\section{Appendix}

 \subsection{Cauchy formula in several variables}
 Let us recall the Cauchy formula for holomorphic functions in several variables.
\begin{defi}
Let $d \ge  1$, a subset  $D$ of $\C^d$ is an open  polydisc if there exist  open discs  $D_{1},...,D_{d}$ of $\C$ such that  $D=D_{1}\times...\times D_{d}$. If we denote by $\zeta_{j}$ the centre of  $D_{j}$, then $\zeta=(\zeta_{1},...,\zeta_{d})$ is the centre of  $D$ and if  $r_{j}$ is the radius of  $D_{j}$ then  $r=(r_{1},...,r_{d})$ is the  multiradius of $D$. The set  $\partial D=\partial D_{1} \times...\times \partial D_{d}$ is the distinguished boundary of  $D$. We denote by  $D(\zeta,r)$ the  polydisc with  center $\zeta$ and   radius $r$. 

 Let $D=D(\zeta,r)$ be a polydisc  of $\C^ d $ and  $g \in C(\partial P)$ a continuous function on $\partial D$. We define the integral of  $g$ on  $\partial D$ as 
 $$\int_{\partial D} g(\zeta) d\zeta_{1}...d\zeta_{d}=(2 i \pi)^d r_{1}...r_{d} \int_{[0,1]^d} g(\zeta(\theta)) e^{i 2\pi\theta_{1}}...e^{i2\pi\theta_{d}} d\theta_{1}...d\theta_{d},  $$
 where $\zeta(\theta)= (\zeta_{1}(\theta),...,\zeta_{d}(\theta))$ and $\zeta_{j}(\theta)=\zeta_{j} + r_{j} e^{i 2\pi\theta_{j}}$ for $j=1,...,d.$
   \end{defi}
  \begin{thm}\label {Cauchy}
 Let $D=D(a,r)$ be  polydisc in $\C^d$ with a multiradius whose components are positive,  and $f$ be a holomorphic function in a neiborhood of  $D$.  Then, for all $z\in P$
 $$f(z)=\frac{1}{(2 i \pi)^d} \int_{\partial D} \frac{f(\zeta) d\zeta_{1}...d\zeta_{d}}{(\zeta_{1}-z_{1})...(\zeta_{d}-z_{d})}. $$
It follows that 
\begin{equation}\label{eeq1}
 \displaystyle\sup_{z\in D(a,r/2)} \left|f(z)\right| \leq 2^{d} \int_{[0,1]^d} \left|f(\zeta(\theta))\right| d\theta_{1}...d\theta_{d} 
\end{equation} \end{thm}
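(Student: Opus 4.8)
The plan is to deduce the $d$-variable formula from the classical one-variable Cauchy integral formula by integrating one coordinate at a time, and then to derive \eqref{eeq1} by estimating the size of the Cauchy kernel on the distinguished boundary.

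First I would fix $z=(z_1,\dots,z_d)\in D$ and peel off the variables successively. Since $f$ is holomorphic on a neighborhood of the closed polydisc $\overline D=\overline{D_1}\times\cdots\times\overline{D_d}$, for fixed $(z_2,\dots,z_d)$ the map $\zeta_1\mapsto f(\zeta_1,z_2,\dots,z_d)$ is holomorphic on a neighborhood of $\overline{D_1}$, so the one-variable Cauchy formula gives
$$f(z)=\frac{1}{2i\pi}\int_{\partial D_1}\frac{f(\zeta_1,z_2,\dots,z_d)}{\zeta_1-z_1}\,d\zeta_1.$$
The integrand is holomorphic in $z_2$ on a neighborhood of $\overline{D_2}$, since the denominator $\zeta_1-z_1$ does not vanish for $\zeta_1\in\partial D_1$; applying the one-variable formula again in the variable $z_2$ and exchanging the order of integration (legitimate, by Fubini's theorem, because the integrand is continuous on the compact set $\partial D_1\times\partial D_2$) yields
$$f(z)=\frac{1}{(2i\pi)^2}\int_{\partial D_1}\int_{\partial D_2}\frac{f(\zeta_1,\zeta_2,z_3,\dots,z_d)}{(\zeta_1-z_1)(\zeta_2-z_2)}\,d\zeta_2\,d\zeta_1.$$
Iterating over $\zeta_3,\dots,\zeta_d$ produces the stated identity.

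For \eqref{eeq1} I would pass to the parametrization $\zeta_j(\theta)=a_j+r_je^{i2\pi\theta_j}$ used in the definition above; the factors $(2i\pi)^d r_1\cdots r_d$ cancel and one obtains
$$f(z)=\int_{[0,1]^d}f(\zeta(\theta))\prod_{j=1}^d\frac{r_je^{i2\pi\theta_j}}{\zeta_j(\theta)-z_j}\,d\theta.$$
If $z\in D(a,r/2)$ then $|z_j-a_j|\le r_j/2$, hence for every $j$ and every $\theta$ one has $|\zeta_j(\theta)-z_j|\ge|\zeta_j(\theta)-a_j|-|a_j-z_j|\ge r_j-r_j/2=r_j/2$, so $\prod_{j=1}^d r_j/|\zeta_j(\theta)-z_j|\le 2^d$. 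Taking absolute values inside the integral gives $|f(z)|\le 2^d\int_{[0,1]^d}|f(\zeta(\theta))|\,d\theta$, and taking the supremum over $z\in D(a,r/2)$ finishes the proof.

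The only mildly delicate point is the bookkeeping in the iteration: at each stage one must check that the function obtained by integrating out the first few variables is still holomorphic in the remaining ones on a neighborhood of the appropriate closed polydisc, and that the resulting iterated integral may be turned into a multiple integral. Because all the integrands in play are continuous on compact distinguished boundaries, this is a straightforward application of Fubini's theorem and presents no genuine obstacle.
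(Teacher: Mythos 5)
Your proof is correct. The paper only recalls this theorem as a classical fact in the appendix and gives no proof, so there is nothing to diverge from: your argument (iterating the one-variable Cauchy formula slice by slice, justified by holomorphy of each slice on a neighborhood of the corresponding closed disc and Fubini on the compact distinguished boundary, then bounding the kernel via $|\zeta_j(\theta)-z_j|\ge r_j-r_j/2=r_j/2$ for $z\in D(a,r/2)$) is the standard derivation and yields exactly \eqref{eeq1}. Two cosmetic remarks: the ``$z\in P$'' in the statement is a typo for $z\in D$, which your proof handles; and the phrase ``the factors $(2i\pi)^d r_1\cdots r_d$ cancel'' is loose (only $(2i\pi)^d$ cancels, the radii being absorbed into the kernel product), but your displayed identity is the correct one, so this is purely a matter of wording.
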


%%%%%%%%%%%%%%%%%%%%%%%%%%%%%%%%%%%%%%%%%%%%%%%%%%%%%%%%%%%%%%%%%%%%%%%%%%%%%%%%%%%%%%%%%%%%%%%%%%%%%%%%%%%%%%%%%%%%%%%%%%%%%%%%%% 
%%%%%%%%%%%%%%%%%%%%%%%%%%%%%%%%%%%%%%%%%%%%%%%%%%%%%%%%%%%%%%%%%%%%%%%%
%%%%%%%%%%%%%%%%%%%%%%%%%%%%%%%%%%%%%%%%%%%%%%%%%%%%%%%%%%%%%%%%%%%%%%%%
\subsection{Mass distribution principle }
%%%%%%%%%%%%%%%%%%%%%%%%%%%%%%%%%%%%%%%%%%%%%%%%%%%%%%%%%%%%%%%%%%%%%%%%
%%%%%%%%%%%%%%%%%%%%%%%%%%%%%%%%%%%%%%%%%%%%%%%%%%%%%%%%%%%%%%%%%%%%%%%%

\begin{thm}\label{Bill}\cite[Theorem 4.2]{Falconer}
Let $\nu $ be a positive and finite Borel probability measure on a  compact metric space $(X,d)$. Assume that $M \subseteq X$ is a Borel set such that  $\nu(M) > 0$ and 
$$ M \subseteq \big\{ t\in X, \displaystyle\liminf_{r\ra 0^+ }\frac{\log \nu(B(t,r))}{\log r } \geq \delta  \big\}.$$
Then the Hausdorff dimension of $M$ is bounded from below by $\delta$. 
 \end{thm}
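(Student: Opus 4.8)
The plan is to run the classical mass-distribution covering argument: I would show that for every $s<\delta$ one has $\mathcal H^{s}(M)>0$, which forces $\dim M\ge s$, and then let $s\uparrow\delta$. First I would localize the hypothesis. Fix $s<\delta$. For each $t\in M$, since $\log r<0$, the bound $\liminf_{r\to 0^{+}}\frac{\log\nu(B(t,r))}{\log r}\ge\delta$ produces a radius $r(t)>0$ with $\nu(B(t,r))\le r^{s}$ for all $0<r\le r(t)$. Put, for $m\ge 1$,
$$M_{m}=\big\{t\in M:\ \nu(B(t,r))\le r^{s}\ \text{for all }0<r\le 1/m\big\}.$$
These sets increase to $M$ as $m\to\infty$, and each is Borel: the map $t\mapsto\nu(B(t,r))$ is upper semi-continuous on the compact metric space $X$ (if $t_{k}\to t$ then $\limsup_{k}B(t_{k},r)\subset B(t,r)$, whence $\limsup_{k}\nu(B(t_{k},r))\le\nu(B(t,r))$), so $\{\,\nu(B(\cdot,r))\le r^{s}\,\}$ is closed for each fixed $r$, and by monotonicity in $r$ it suffices to intersect these closed sets over the rationals $r\in(0,1/m]$. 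Since $\nu$ is a finite measure and $\nu(M)>0$, continuity from below gives an $m$ with $\nu(M_{m})>0$, which I now fix.

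Next I would bound $\mathcal H^{s}$ from below by $\nu$ on $M_{m}$. Let $(U_{i})_{i}$ be any countable cover of $M_{m}$ with $\mathrm{diam}(U_{i})\le 1/m$ for all $i$. Discard those $U_{i}$ disjoint from $M_{m}$ and pick $t_{i}\in U_{i}\cap M_{m}$; then $U_{i}\subset B(t_{i},\mathrm{diam}(U_{i}))$, and since $\mathrm{diam}(U_{i})\le 1/m$ the defining property of $M_{m}$ gives $\nu(U_{i})\le\nu\big(B(t_{i},\mathrm{diam}(U_{i}))\big)\le\mathrm{diam}(U_{i})^{s}$. Summing over $i$,
$$\sum_{i}\mathrm{diam}(U_{i})^{s}\ \ge\ \sum_{i}\nu(U_{i})\ \ge\ \nu(M_{m}).$$
Taking the infimum over all such covers yields $\mathcal H^{s}_{1/m}(M_{m})\ge\nu(M_{m})$, and letting the mesh tend to $0$ gives $\mathcal H^{s}(M)\ge\mathcal H^{s}(M_{m})\ge\nu(M_{m})>0$. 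Hence $\dim M\ge s$, and since $s<\delta$ was arbitrary, $\dim M\ge\delta$.

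No step here is genuinely hard; the two points I would be careful about are the passage from the pointwise $\liminf$ estimate to a uniform one on a set of positive $\nu$-measure — handled by the exhaustion $M=\bigcup_{m}M_{m}$ together with continuity from below of the finite measure $\nu$, plus the routine check that the $M_{m}$ are Borel — and the elementary inclusion $U_{i}\subset B(t_{i},\mathrm{diam}(U_{i}))$ that lets one replace arbitrary covering sets by balls centered in $M_{m}$. Incidentally, the hypothesis forces $\nu$ to be non-atomic on $M$ (let $r\downarrow 0$), so covering sets reduced to a single point are harmless.
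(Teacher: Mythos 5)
Your proof is correct: it is the standard mass-distribution (Billingsley/Frostman) argument, reducing the pointwise liminf bound to the uniform estimate $\nu(B(t,r))\le r^{s}$ on the exhausting Borel sets $M_m$ and then running the covering comparison $\sum_i \mathrm{diam}(U_i)^s\ge\nu(M_m)>0$, and you handle the two delicate points (measurability of $M_m$ and singleton covering sets) properly. The paper itself gives no proof of this statement—it simply cites \cite[Theorem 4.2]{Falconer}—and your argument is essentially the proof found there, so there is nothing to reconcile.
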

\subsection{Subgradient of convexe function}
Let $f: \R^d \lra \bar \R$, and $x\in \R^d$. A vector $\xi \in \R^d$ is said to be subgradient of $f$ at $x$ if
$$\forall y\in \R^d, \;\;\; f(y) \geq f(x) +\left\langle \xi | y-x \right\rangle.$$
The set of all subgradient of $f$ at $x$ is denoted by $\partial f (x)$. 
\begin{prop}\label{pppp1} \cite {Roc}
If $f$ is convex and differentiable at $x$, then $\partial f(x)= \{ \nabla f(x)\}$.
\end{prop}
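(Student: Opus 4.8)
The plan is to prove the two inclusions $\{\nabla f(x)\}\subseteq \partial f(x)$ and $\partial f(x)\subseteq\{\nabla f(x)\}$ separately, both by reducing to one-dimensional difference quotients along rays emanating from $x$ and exploiting the convexity of $f$. Throughout I would use that differentiability at $x$ forces $f$ to be finite in a neighborhood of $x$, so that all the difference quotients below are well defined; wherever $f(y)=+\infty$ the subgradient inequality is trivially satisfied and may be ignored.

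For the inclusion $\nabla f(x)\in\partial f(x)$, I would fix an arbitrary $y\in\R^d$ with $f(y)$ finite and apply convexity along the segment from $x$ to $y$: for $t\in(0,1]$, writing the point as $(1-t)x+ty=x+t(y-x)$ and using the convexity inequality gives $\frac{f(x+t(y-x))-f(x)}{t}\le f(y)-f(x)$. Letting $t\to 0^+$ and invoking differentiability of $f$ at $x$, so that the left-hand side tends to the directional derivative $\left\langle \nabla f(x)\,|\, y-x\right\rangle$, yields $f(y)\ge f(x)+\left\langle \nabla f(x)\,|\,y-x\right\rangle$, which is exactly the subgradient inequality defining $\partial f(x)$.

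For the reverse inclusion, suppose $\xi\in\partial f(x)$. For an arbitrary direction $v\in\R^d$ and $t>0$ I would substitute $y=x+tv$ into the subgradient inequality and divide by $t$ to obtain $\frac{f(x+tv)-f(x)}{t}\ge\left\langle\xi\,|\,v\right\rangle$. Passing to the limit $t\to 0^+$ and again using differentiability gives $\left\langle\nabla f(x)\,|\,v\right\rangle\ge\left\langle\xi\,|\,v\right\rangle$. Since this holds for every $v$, replacing $v$ by $-v$ produces the opposite inequality, so $\left\langle\nabla f(x)-\xi\,|\,v\right\rangle=0$ for all $v\in\R^d$, whence $\xi=\nabla f(x)$.

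There is no genuine obstacle here; the one point deserving care is the passage to the limit in the difference quotient. I would stress that differentiability at the single point $x$ already suffices to identify $\lim_{t\to 0^+}\frac{f(x+tv)-f(x)}{t}$ with $\left\langle\nabla f(x)\,|\,v\right\rangle$ for every $v$, without requiring differentiability on a neighborhood: convexity alone guarantees that these one-sided directional limits exist, but it is precisely differentiability that makes them linear in $v$ and equal to the gradient pairing, which is what collapses $\partial f(x)$ to the single vector $\nabla f(x)$.
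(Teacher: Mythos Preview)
Your proof is correct and is essentially the standard argument one finds in Rockafellar's \emph{Convex Analysis}, from which the paper simply cites this proposition without giving a proof. There is nothing to compare: the paper treats this as a known fact from the literature, and your two-inclusion argument via one-dimensional difference quotients is exactly the classical route.
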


\medskip

\textbf{Acknowledgement}

\medskip

The author would like to thank  professor Julien Barral for his interesting comments and many valuable suggestions on this work. 

\medskip

The author would like to thank  the referee  for his interesting comments which contributed to improve the paper.

\end{document}